\begin{document}

\title{On lower bounds of the density of planar periodic sets without unit distances}

\author{Alexander Tolmachev\inst{1, 2, 3} }

\authorrunning{A. Tolmachev}

\institute{Moscow Institute of Physics and Technology, Moscow, Russia \and
Caucasus Mathematical Center of Adyghe State University, Maikop, Russia
\and
Skolkovo Institute of Science and Technology, Moscow, Russia\\
\email{tolmachev.ad@phystech.edu}}

\maketitle         
\begin{abstract}
Determining the maximal density $m_1(\bbbr^2)$ of planar sets without unit distances is a fundamental problem in combinatorial geometry. This paper investigates lower bounds for this quantity. We introduce a novel approach to estimating $m_1(\bbbr^2)$ by reformulating the problem as a Maximal Independent Set (MIS) problem on graphs constructed from the flat torus, focusing on periodic sets with respect to two non-collinear vectors. Our experimental results, supported by theoretical justifications of the proposed method, demonstrate that for a sufficiently wide range of parameters, this approach does not improve the known lower bound $0.22936 \le m_1(\bbbr^2)$. The best discrete sets found are approximations of Croft's construction. In addition, several open-source software packages for the MIS problem are compared on this task.

\keywords{Distance-avoiding sets \and independent set search \and planar colorings \and combinatorial optimization \and discrete optimization }
\end{abstract}

\section{Introduction}

In the 1960s, Leo Moser \cite{croft1967incidence,erdos1985} raised a natural question in the combinatorial geometry area: How dense can a set in $\bbbr^d$ be if it contains no pairs of points being unit-distance apart? This problem, belonging to the field of combinatorial geometry known as the combinatorics of unit distances, is closely related to the well-known Hadwiger–Nelson problem \cite{Soifer2016TheHP}, proposed by Nelson in 1950. Nelson asked: What is the minimum number of colors required to color the Euclidean plane $\bbbr^d$ such that no two points at unit distance apart are identically colored? This question has gained renewed interest in recent years following de Grey's groundbreaking discovery \cite{degrey2018chromatic} that the plane is not 4-colorable. The PolyMath16 project \cite{PolyMath16} has also contributed significantly to ongoing research in this area. Notably, any proper plane coloring requires that each color class avoids unit distances. This observation motivates the focus of this paper.

\begin{definition}
A graph with vertices in $\bbbr^d$ is called a \textbf{unit distance graph} if two vertices are adjacent if and only if they are at Euclidean distance 1 apart.
\end{definition}

The unit distance graph of $\bbbr^d$ is obtained by taking all points of $\bbbr^d$ as vertices, and connecting two vertices by an edge if and only if the corresponding points are at unit distance apart. The chromatic number of this graph is denoted by $\chi(\bbbr^d)$.

\begin{definition}
A set $A \in \bbbr^d$ is called \textbf{1-avoiding} if $A$ is the independent set in some unit distance graph with vertices in $\bbbr^d$.
\end{definition}

Assume that $A$ is measurable. The upper density of $A$, denoted by $\overline{\delta(A)}$, is defined as
$$\overline{\delta(A)} = \displaystyle\lim\sup_{R \rightarrow \infty}\frac{\lambda_d(A \cap B_d(x, R))}{\lambda_d(B_d(x, R))},$$ 
 where $\lambda_d$ is the $d$-dimensional Lebesgue measure, $B_d(x, R)$ indicates the $d$-dimensional ball of radius $R$ centered at $x$, and
 $x \in \bbbr^d$ is fixed. The definition is valid since the upper density is known to be independent of the choice of $x \in \bbbr^d$.

 Let $m_1(\bbbr^d)$ denote the supremum of the upper densities of all 1-avoiding measurable sets in $\bbbr^d$:

 $$m_1(\bbbr^d) = \sup \left\{  \overline{\delta(A)}: A \in \bbbr^d \textit{ is 1-avoiding and measurable} \right\}.$$

Estimating $m_1(\bbbr^2)$ was posed and popularized by Leo Moser (see Problem 25 in \cite{Moser1966}) and Paul Erdős. For over 50 years, various researchers have tried to improve the upper and lower bounds of this value.

The simplest non-trivial lower bound can be obtained by placing open circular discs of radius $1/2$ on a regular hexagonal lattice generated by two vectors of length $2$ forming an angle $\pi/3$. This configuration creates a set that avoids distances of $1$, with a density of $\pi/\left(8\sqrt{3} \right) \approx 0.2267$. A slight improvement upon this lower bound was proposed by Croft \cite{croft1967incidence} in 1967, establishing that $m_1(\bbbr^2) \ge 0.22936...$. 

Croft's construction is based on the shape of a ``tortoise'', which is the intersection of an open disc of radius $1/2$ and an open regular hexagon of height $x < 1$. By placing copies of this tortoise at each point of a regular hexagonal lattice with basis vectors of length $1 + x$, a 1-avoiding planar set is created. Optimizing the value of $x$ reveals that the maximal density is achieved with $x^* = 0.96533...$. It is worth noting that this construction is periodic. Despite its apparent simplicity, no further improvements to the $m_1(\bbbr^2)$ lower bound have been suggested since 1967.

However, upper bound of $m_1(\bbbr^2)$ has been improved several times over these years. Regarding the maximal density in the whole plane, Paul Erdős stated in \cite{erdos1985}: ``It seems very likely that $m_1(\bbbr^2)$ is less than 1/4''. Surprisingly, this conjecture was proven after almost 40 years in 2022. Gergely Ambrus and Mate Matolcsi \cite{ambrus2022}, using a combination of Fourier analytic and linear programming methods, first proved that $m_1(\bbbr^2) \le 0.25442$. One year later, in 2023, these authors, along with colleagues, improved their estimate to $m_1(\bbbr^2) \le 0.247$, confirming Erdős's statement \cite{ambrus2023}. Additionally, a recent paper \cite{cohen2024} explores clustering in 1-avoiding planar sets via the linear programming approach and Fourier analysis tools previously used for upper bound proofs of $m_1(\bbbr^2)$. This highlights that the study of such sets remains attractive and relevant to various scientists. 

Finally, according to the best existing bounds, $0.2293 \le m_1(\bbbr^2) \le 0.2470$, a significant gap between upper and lower estimates of $m_1(\bbbr^2)$ remains. The periodic structure of current constructions for the lower bound, coupled with the lack of lower bound improvements for over 50 years, motivates the exploration of 1-avoiding planar sets from the perspective of searching for optimal or sub-optimal periodic structures. This is the main goal of this paper, which proposes a parameterization of planar periodic sets and its analysis. More precisely, we study planar sets that are periodic with respect to two non-collinear vectors and explore the lower bound of $m_1(\bbbr^2)$ over this class of measurable planar sets without unit distances.

\section{Periodic grids}

Let's consider the measurable set $A \subset \bbbr^2$ as a coloring of the plane with two colors. The periodic plane structure can be represented by a regular lattice generated by two non-collinear vectors $\vec{v}_1$ and $\vec{v}_2$. In other words, this lattice consists of copies of parallelograms spanned by $\vec{v}_1$ and $\vec{v}_2$ that cover the entire plane.

Next, it is important to note that this parallelogram can be viewed as a flat torus generated by vectors $\vec{v}_1$, $\vec{v}_2$. This observation leads to the following definition:

\begin{definition}
    Let $T_{l_1, l_2, \alpha}$, where $l_1, l_2 \in \bbbr_+$, $\alpha \in (0, \pi/2]$, denote the flat torus defined as the parallelogram generated by vectors $\vec{v}_1, \vec{v}_2$ such that $l_1 = |\vec{v}_1|$, $l_2 = |\vec{v}_2|$, and $\angle(\vec{v}_1, \vec{v}_2) = \alpha$.
\end{definition}

The corresponding metric $\rho_{l_1, l_2, \alpha}$ between points on this flat torus is obtained by applying the Euclidean metric on the plane. 

\begin{definition}
    Let $p_1 = x_1 \vec{v}_1 + y_1 \vec{v}_2, p_2 = x_2 \vec{v}_1 + y_2 \vec{v}_2$, where $x_1, y_1, x_2, y_2 \in [0, 1)^2$, be arbitrary points on the flat torus $T_{l_1, l_2, \alpha}$. Then, the corresponding metric $\rho_{l_1, l_2, \alpha}$ is defined as follows:
    $$\rho_{l_1, l_2, \alpha}\left(p_1, p_2\right) = \min \left\{ \left| (x_2 - x_1 + m) \cdot \vec{v}_1 + (y_2 - y_1 + n) \cdot \vec{v}_2 \right | : m, n \in \bbbz \right\}. $$ 
\end{definition}

The point $p = x \vec{v}_1 + y \vec{v}_2 \in T_{l_1, l_2, \alpha}$ will be denoted by its affine coordinates: $p = (x, y)$, where $x, y \in [0, 1)^2$. Metric indices $l_1, l_2, \alpha$ indicate that the metric definition depends on the corresponding flat torus parameters. However, from this point forward, we will denote the metric $\rho_{l_1, l_2, \alpha}$ simply as $\rho$, omitting the metric indices for convenience.

The following lemma highlights the possibility of computing the metric $\rho$ via a finite search (instead of infinite) over $m, n$ variables in the formula above from the metric definition:

\begin{lemma}
    Let $T_{l_1, l_2, \alpha}$ be the flat torus such that $l_1 \le l_2$. Let $p_1 = (x_1, y_1)$, $p_2 = (x_2, y_2) \in T_{l_1, l_2,\alpha}$ be arbitrary points. Then,     

    \begin{align*}
        & \rho(p_1, p_2) = \min \{ \left| (x_2 - x_1 + m) \cdot \vec{v}_1 + (y_2 - y_1 + n) \cdot \vec{v}_2 \right | : \\ & \hspace{6cm} n \in \mathbb{Z} \cap [-k, k], m \in \{ \lfloor f(n) \rfloor, \lceil f(n)  \rceil \} \}, \\
        & \text{where } k = \left\lceil\frac{1}{1 - \cos^2 \alpha}\right\rceil + 1,  f(n) = -\left(x_2 - x_1 +\frac{(y_2 - y_1 + n) \cdot l_2\cos{\alpha}}{l_1} \right).
    \end{align*}

\label{metric_computation}
\end{lemma}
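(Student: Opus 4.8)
The plan is to turn the computation of $\rho(p_1,p_2)$ into the minimization of a quadratic form over the integer lattice $\mathbb{Z}^2$, and then to trim the search in two independent ways: for a fixed $n$ the optimal $m$ is forced to be one of the two integers nearest to $f(n)$ because the relevant function is a convex parabola in $m$; and the admissible values of $n$ are confined to $[-k,k]$ by comparing a lower bound obtained from relaxing $m$ to a real variable against a cheap upper bound for $\rho(p_1,p_2)$.

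Concretely, write $a = x_2 - x_1$ and $b = y_2 - y_1$, so $a,b \in (-1,1)$, and set
\[
 Q(m,n) \;=\; \bigl|(a+m)\vec v_1 + (b+n)\vec v_2\bigr|^2 \;=\; l_1^2(a+m)^2 + 2 l_1 l_2 \cos\alpha\,(a+m)(b+n) + l_2^2(b+n)^2,
\]
using $\vec v_1\cdot \vec v_2 = l_1 l_2\cos\alpha$; then $\rho(p_1,p_2)^2 = \min_{m,n\in\mathbb Z} Q(m,n)$ by the metric definition. For a fixed $n$, $Q(\cdot,n)$ is an upward parabola in the real variable $m$ whose vertex lies at $m = f(n)$ (a one-line computation of the critical point), so its minimum over $m\in\mathbb Z$ is attained at $\lfloor f(n)\rfloor$ or $\lceil f(n)\rceil$ — this is exactly the inner search in the statement. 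Evaluating that parabola at its vertex yields $\min_{m\in\mathbb R} Q(m,n) = l_2^2 \sin^2\alpha\,(b+n)^2$, hence in particular
\[
 \min_{m\in\mathbb Z} Q(m,n) \;\ge\; l_2^2\sin^2\alpha\,(b+n)^2 .
\]

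For the matching upper bound, choose integers $m_0,n_0$ with $a+m_0,\,b+n_0 \in (-\tfrac12,\tfrac12]$ (each coset of $\mathbb Z$ meets a half-open unit interval exactly once), so by the triangle inequality $\rho(p_1,p_2) \le |a+m_0|\,l_1 + |b+n_0|\,l_2 \le \tfrac12(l_1+l_2)$. Now let $(m^*,n^*)$ attain the minimum defining $\rho(p_1,p_2)$. Combining the two displays and using $l_1 \le l_2$,
\[
 l_2^2\sin^2\alpha\,(b+n^*)^2 \;\le\; \rho(p_1,p_2)^2 \;\le\; \tfrac14(l_1+l_2)^2 \;\le\; l_2^2 ,
\]
which gives $|b+n^*| \le 1/\sin\alpha = 1/\sqrt{1-\cos^2\alpha} \le 1/(1-\cos^2\alpha)$, and therefore $|n^*| \le |b+n^*| + |b| < 1/(1-\cos^2\alpha) + 1 \le k$. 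Thus $n^*\in\mathbb Z\cap[-k,k]$, and for this $n^*$ one of $\lfloor f(n^*)\rfloor,\lceil f(n^*)\rceil$ attains $\min_{m}Q(m,n^*) = \rho(p_1,p_2)^2$. Since every pair in the finite search set produces a value $\ge \rho(p_1,p_2)$ by the definition of $\rho$ as a minimum, the finite minimum equals $\rho(p_1,p_2)$.

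The parabola and vertex-value computations are routine; the step that needs care is the last display — picking the right two quantities to compare, namely the ``relax $m$'' lower bound $l_2^2\sin^2\alpha(b+n)^2$ against the easy upper bound $\tfrac12(l_1+l_2)$, and invoking $l_1\le l_2$ to collapse $(l_1+l_2)^2/(4l_2^2)$ to at most $1$, which is precisely what produces the stated range for $n$. It is also worth checking the degenerate case $\alpha=\pi/2$ (then $\cos\alpha=0$ and $k=2$), where the bound is trivially valid, so no positivity issue from $\cos^2\alpha<1$ arises in the range $\alpha\in(0,\pi/2]$.
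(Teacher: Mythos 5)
Your proof is correct and follows essentially the same route as the paper: the same quadratic form, the same completion of the square giving the vertex $f(n)$ and the residual $l_2^2\sin^2\alpha\,(b+n)^2$, and the same comparison of that lower bound against a cheap upper bound $\rho\le l_2$ to confine $n$ to $[-k,k]$. Your derivation of the upper bound (reducing $a+m_0,b+n_0$ to a half-open unit interval around $0$) is a slightly cleaner variant of the paper's case analysis with the vectors $\vec w,\vec u$, but the argument is the same in substance.
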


\begin{proof}

Denote $t_1 = x_2 - x_1$ and $t_2 = y_2 - y_1$, where $t_1, t_2 \in (-1, 1)$. Recall that for any vector $\vec{v} \in \bbbr^2$ and the standard scalar product the equation $(\vec{v}, \vec{v}) = |\vec{v}|^2$ holds. Then, we will minimize the following function $g(m, n): \bbbr \times \bbbr \rightarrow \bbbr_+$ over integer arguments: 

\begin{align*}
g(m, n) & = \left| (x_2 - x_1 + m) \cdot \vec{v}_1 + (y_2 - y_1 + n) \cdot \vec{v}_2 \right |^2 = \\ 
        & = \left((t_1 + m) \cdot \vec{v}_1 + (t_2 + n) \cdot \vec{v}_2, (t_1 + m) \cdot \vec{v}_1 + (t_2 + n) \cdot \vec{v}_2 \right) =  \\
        & = (t_1 + m)^2 (\vec{v}_1, \vec{v}_1) + (t_2 + n)^2 (\vec{v}_2, \vec{v}_2) + 2(t_1 + m)(t_2 + n)(\vec{v}_1, \vec{v}_2) = \\
        & = l_1^2(t_1 + m)^2 + l_2^2(t_2 + n)^2 + 2(t_1 + m)(t_2 + n)\cdot l_1 l_2 \cos \alpha.
\end{align*}

The last term can be written as the quadratic function regarding the variable $m$:

\begin{align*}
 g(m, n) & = l_1^2(t_1 + m)^2 + l_2^2(t_2 + n)^2 + 2(t_1 + m)(t_2 + n)\cdot l_1 l_2 \cos \alpha = \\
         & = l_1^2 \left(m + t_1 + \frac{(t_2 + n)l_2 \cos\alpha}{l_1} \right)^2 + l_2^2 (t_2 + n)^2 (1 - \cos^2 \alpha) = \\ 
         & = l_1^2 \left(m - f(n) \right)^2 + l_2^2 (t_2 + n)^2 (1 - \cos^2 \alpha) \rightarrow \displaystyle\min_{m, n \in \bbbz}
\end{align*}

Denote by $m_{\textit{opt}}, n_{\textit{opt}} \in \bbbz$ are global minimum point of the function $g$ over integer arguments. The first observation is as follows: if the value of variable $n$ is fixed, then optimal (to minimize the function $g$) $m \in \bbbz$ is equal to one of the nearest integer values to $f(n)$, so $m_{\textit{opt}}  \in \left \{ \lfloor f(n) \rfloor, \lceil f(n)  \rceil \right \} $.

This leads to the next natural question: how to find the optimal value of the variable $n$ corresponding to the global minimum of function $g$?

Without loss of generality, assume $t_1, t_2 \ge 0$. Denote the vectors $\vec{w}, \vec{u}$ as follows:
$\vec{w} = t_1 \cdot \vec{v}_1 + t_2 \cdot \vec{v}_2$ and
$\vec{u} = (t_1 - 1) \cdot \vec{v}_1 + (t_2 - 1) \cdot \vec{v}_2$.

Then, $|\vec{w}| \le t_1 |\vec{v}_1| + t_2 |\vec{v}_2|$ and $|\vec{u}| \le (1 - t_1) |\vec{v}_1| + (1 - t_2) |\vec{v}_2|$, because $t_1, t_2 \in [0, 1)$ in this case. Hence, $|\vec{w}| + |\vec{u}| \le (t_1 + 1 - t_1)|\vec{v}_1| + (t_2 + 1 - t_2)|\vec{v}_2| = l_1 + l_2 \le 2l_2$ due to $l_1 \le l_2$. Therefore, $\min \{ |\vec{w}|, |\vec{u}| \} \le 2l_2 / 2 = l_2$. From  $g(0, 0) = |\vec{w}|^2$ and $g(-1, -1) = |\vec{u}|^2$, it follows that $\min \{ g(0, 0), g(-1, -1) \} \le l_2^2$. Since $ \left [ \rho(p_1, p_2) \right ]^2 = \min \left \{ g(m, n): m, n \in \bbbz \right \}$, we obtain that $\left [ \rho(p_1, p_2) \right ]^2 \le l_2^2$. Hence, $\rho(p_1, p_2) \le l_2$.

Cases $(t_1 \ge 0, t_2 < 0)$, $(t_1 < 0, t_2 \ge 0)$, $(t_1 < 0, t_2 < 0)$ can be considered analogously. The vector $\vec{u}$ is defined the same in all cases, while the vector $w$ is defined differently. For instance, in case $(t_1 < 0, t_2 \ge 0)$, we define $\vec{u} = (t_1 + 1) \vec{v}_1 + (t_2 - 1) \vec{v}_2$.

The proven inequality $\rho(p_1, p_2) \le l_2$ leads us to the second observation: if $g(m, \hat{n}) > l_2^2$ for all $m \in \bbbz$, then $\hat{n} \neq n_{\textit{opt}}$. 

Notice that $$g(m, n) = l_1^2 \left(m - f(n) \right)^2 + l_2^2 (t_2 + n)^2 (1 - \cos^2 \alpha) \ge l_2^2 (t_2 + n)^2 (1 - \cos^2 \alpha) > l_2^2$$ if $|t_2 + n| > \frac{1}{1 - \cos^2 \alpha}$. Since $t_2 \in (-1, 1)$, we obtain that $g(m, n) > l_2^2$ if $|n| >  \lceil\frac{1}{1 - \cos^2 \alpha}\rceil + 1$. Therefore, to find the $n_{\textit{opt}}$, it is sufficient to consider the integer values $n \in [-k, k]$, where $k = \lceil\frac{1}{1 - \cos^2 \alpha}\rceil + 1$.

Hence, we prove that to find the minimum of the function $g(m, n)$, it is sufficient to consider the integer values $n \in [-k, k]$, where $k = \lceil\frac{1}{1 - \cos^2 \alpha}\rceil + 1$, and for each value of $n$, consider the two values of the variable $m$ that can be computed via the function $f(n)$. Overall, we have proved that the minimum of $g(m, n)$ over this set of pairs $(n, m)$ is the global minimum of $g(m, n)$ over integer arguments. \qed
\end{proof} 

To improve readability, we will omit the word ``planar'' in some cases, as only 1-avoiding planar sets are considered in this paper. The following definition will be useful in the next sections to clarify the relationship between 1-avoiding planar periodic sets and sets without unit distances (via metric $\rho$) on the flat torus.

\begin{definition}
    Let us call the flat torus $T_{l_1, l_2, \alpha}$ (generated by vectors $\vec{v}_1, \vec{v}_2$) \underline{perfectly periodic} if for any two points $p_1 = (x_1, y_1), p_2 = (x_2, y_2) \in T_{l_1, l_2, \alpha}$ the following implication holds:

    $$ \rho(p_1, p_2) \ne 1 \Rightarrow \left| (x_2 - x_1 + m) \cdot \vec{v}_1 + (y_2 - y_1 + n) \cdot \vec{v}_2 \right | \ne 1 \quad \forall m, n \in \bbbz $$
    
\end{definition}

This definition means that for such a flat torus, if the set $A \subset T_{l_1, l_2, \alpha}$ is 1-avoiding via the $\rho$ metric, then it can be copied onto the regular lattice generated by vectors $\vec{v}_1, \vec{v}_2$ to form the periodic 1-avoiding set $\hat{A} \subset \bbbr^2$ via the Euclidean metric on the plane. This leads to the natural question: how to describe the set of perfectly periodic toruses? The following lemma proposes the sufficient condition for the parameters $l_1, l_2, \alpha$.

\begin{lemma}
A flat torus $T_{l_1, l_2, \alpha}$ is perfectly periodic if $l_1 \ge 2$ and $l_2 \sin \alpha \ge 2$. 

\label{lemma_perfectly_periodic}
\end{lemma}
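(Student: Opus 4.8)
The plan is to prove the contrapositive form of perfect periodicity and reduce it to a statement about the shortest nonzero vector of the underlying lattice. Write $L = \bbbz\vec{v}_1 + \bbbz\vec{v}_2$ for the lattice generated by the basis vectors, and for points $p_1 = (x_1, y_1)$, $p_2 = (x_2, y_2)$ set $t_1 = x_2 - x_1$, $t_2 = y_2 - y_1$ and $\vec{w} = t_1\vec{v}_1 + t_2\vec{v}_2$. Then the family of vectors appearing in the definition, $\{(t_1 + m)\vec{v}_1 + (t_2 + n)\vec{v}_2 : m, n \in \bbbz\}$, is exactly the coset $\vec{w} + L$, and $\rho(p_1, p_2)$ is the distance from the origin to this coset (a minimum, by \cref{metric_computation}). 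So I must show: if some element of $\vec{w} + L$ has norm exactly $1$, then every element of $\vec{w} + L$ has norm $\ge 1$, so that this minimum equals $1$. Since any two distinct elements of a coset of $L$ differ by a nonzero element of $L$, this follows once we establish the \emph{packing claim}: under $l_1 \ge 2$ and $l_2\sin\alpha \ge 2$, every nonzero vector of $L$ has norm $\ge 2$.

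To prove the packing claim I would reuse the orthogonal decomposition already derived inside the proof of \cref{metric_computation}: for any $m, n \in \bbbz$,
$$|m\vec{v}_1 + n\vec{v}_2|^2 = l_1^2\Bigl(m + \tfrac{n l_2 \cos\alpha}{l_1}\Bigr)^2 + n^2 l_2^2 \sin^2\alpha,$$
using $1 - \cos^2\alpha = \sin^2\alpha$. If $n \ne 0$, the second summand alone is at least $l_2^2\sin^2\alpha \ge 4$; if $n = 0$ then $m \ne 0$ and the vector is $m\vec{v}_1$, of norm $|m|\,l_1 \ge l_1 \ge 2$. In either case $|m\vec{v}_1 + n\vec{v}_2| \ge 2$. (Geometrically: the coordinate $n$ of a lattice point contributes a component of length $|n|\cdot l_2\sin\alpha$ orthogonal to $\vec{v}_1$, while points with $n = 0$ lie on the line $\bbbz\vec{v}_1$, whose spacing is $l_1$.)

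Finally I assemble the pieces. Suppose, for contradiction, that $\rho(p_1, p_2) \ne 1$ yet there exist $m_0, n_0 \in \bbbz$ with $\vec{s} := (t_1 + m_0)\vec{v}_1 + (t_2 + n_0)\vec{v}_2$ of norm exactly $1$. Since $\rho(p_1, p_2)$ is the minimum over all integer pairs and $\vec{s} \in \vec{w} + L$, we get $\rho(p_1, p_2) \le 1$, hence $\rho(p_1, p_2) < 1$; choose $m_1, n_1$ realizing a vector $\vec{s}' \in \vec{w} + L$ with $|\vec{s}'| < 1$. Then $\vec{s} - \vec{s}' = (m_0 - m_1)\vec{v}_1 + (n_0 - n_1)\vec{v}_2 \in L$, and it is nonzero because $|\vec{s}| = 1 \ne |\vec{s}'|$. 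But $|\vec{s} - \vec{s}'| \le |\vec{s}| + |\vec{s}'| < 2$, contradicting the packing claim. Hence no such $m_0, n_0$ exist, which is exactly the implication in the definition of perfect periodicity.

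The argument is short, and its only real content is the packing claim; the one place to be careful is the degenerate possibility that the two realizing vectors coincide, which is ruled out precisely because their norms differ ($1$ versus something strictly smaller) — so the hypothesis $\rho(p_1,p_2) \ne 1$, rather than merely $\rho(p_1,p_2) \le 1$, is used essentially. I expect that to be the subtle point rather than a genuine obstacle.
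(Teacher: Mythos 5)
Your proof is correct, and it takes a genuinely different and more economical route than the paper's. The paper argues by cases on the value of $\rho(p_1,p_2)$ (greater than $1$, equal to $0$, in $(0,1)$) and, in the main case, builds an explicit geometric configuration: two families of collinear points $A_i$, $B_j$, distance estimates via projections onto a perpendicular to the line through the $A_i$, and separate triangle-inequality arguments for the rows $j = 0$ and $j = -1$, with further subcases on the signs of $t_1$, $t_2$ and of $y_2 - y_1$. You instead isolate the single fact doing all the work --- under $l_1 \ge 2$ and $l_2 \sin\alpha \ge 2$ every nonzero vector of the lattice $L = \bbbz\vec{v}_1 + \bbbz\vec{v}_2$ has length at least $2$ --- and prove it by the same completion-of-the-square decomposition the paper already uses in \cref{metric_computation}, splitting only on whether the $\vec{v}_2$-coordinate vanishes. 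The coset argument then finishes in two lines: a norm-$1$ element and a norm-$<1$ element of $\vec{w} + L$ would be distinct, hence would differ by a nonzero lattice vector of length less than $2$. This buys brevity, eliminates the sign case analysis, makes the role of each hypothesis transparent ($l_1$ controls the $n = 0$ lattice vectors, $l_2\sin\alpha$ all the others), and in fact yields slightly more: the coset contains at most one vector of norm less than $1$. The one delicate step is the one you flag --- the two realizing vectors must be distinct --- and your resolution (their norms differ) is sound; attainment of the minimum is guaranteed because the coset is discrete, or by the finite search of \cref{metric_computation}.
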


\begin{proof}

Consider arbitrary two points with its affine coordinates $p_1 = (x_1, y_1)$, $p_2 = (x_2, y_2) \in T_{l_1, l_2, \alpha}$ and suppose that $\rho(p_1, p_2) \ne 1$. We will show that $$\left| (x_2 - x_1 + m) \cdot \vec{v}_1 + (y_2 - y_1 + n) \cdot \vec{v}_2 \right | \ne 1 \; \forall m, n \in \bbbz.$$ This will prove that $T_{l_1, l_2, \alpha}$ is perfectly periodic.

If $\rho(p_1, p_2) > 1$, then, according to the metric definition, $$\left| (x_2 - x_1 + m) \cdot \vec{v}_1 + (y_2 - y_1 + n) \cdot \vec{v}_2 \right | > 1 \; \forall m, n \in \bbbz.$$ 

If $\rho(p_1, p_2) = 0$, then $x_1 = x_2, y_1 = y_2$, because $x_1, x_2, y_1, y_2 \in [0, 1)$. Hence, for any $(m, n) \ne (0, 0)$ the following equality holds: $$\left| (x_2 - x_1 + m) \cdot \vec{v}_1 + (y_2 - y_1 + n) \cdot \vec{v}_2 \right | \ge \min \{|\vec{v}_1|, |\vec{v}_2| \}  = \min \{l_1, l_2 \} \ge 2.$$ Therefore, in the case $\rho(p_1, p_2) = 0$ the required statement has been proved.

It remains to prove this fact in the last case: $\rho(p_1, p_2) \in (0, 1)$. Let the point $O$ be the origin of the coordinates and the points $A_i$, $B_j$, where $i, j \in \bbbz$ are defined in the following way:

$\overline{OA_i} = (x_1 - x_2 - i) \vec{v}_1 \; \forall i \in \bbbz$ and $\overline{OB_j} = (y_2 - y_1 + j) \vec{v}_2 \; \forall j \in \bbbz$.

Then, $\overline{A_iB_j} = \overline{OB_j} - \overline{OA_i} = (x_2 - x_1 + i) \vec{v}_1 + (y_2 - y_1 + j) \vec{v}_2$. So, we can reformulate our statement in the following way: 
$$\displaystyle\min_{i, j \in \bbbz} A_iB_j < 1 \Rightarrow A_iB_j \ne 1 \quad \forall i, j \in \bbbz.$$

Denote by $a$ the line containing points $A_i, i \in \bbbz$. The line $b$ is defined analogously. Let $a_\perp$ be the perpendicular to the line $a$. The length of the projection of the segment $AB$ onto the line $l$ will be denoted by $\pi(AB, l)$. The distance from the point $A$ to the line $l$ will be denoted by $d(A, l)$. The construction used for this proof is shown in Fig.~\ref{lemma2_proof}.

\begin{figure}
\includegraphics[width=\textwidth]{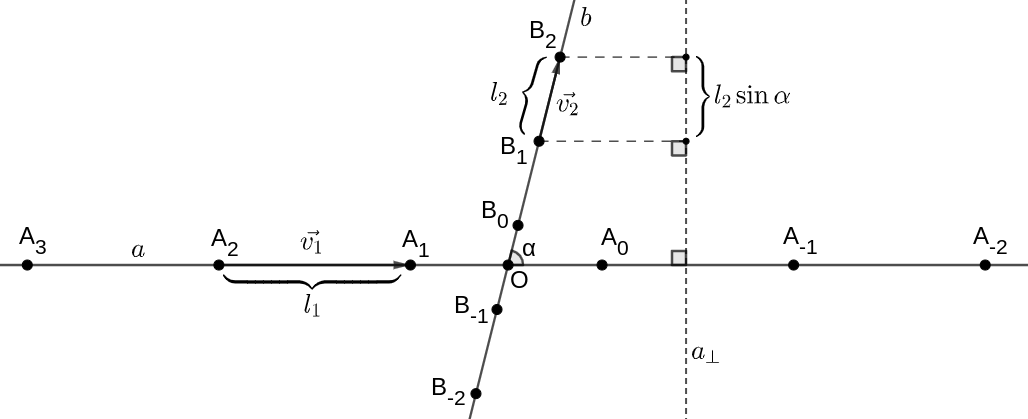}
\caption{Construction for the proof of Lemma~\ref{lemma_perfectly_periodic} in case $y_2 - y_1 \in [0, 1)$.} \label{lemma2_proof}
\end{figure}

Let $y_2 - y_1 \in [0, 1)$. Thus, if integer $j \ge 1$, then $A_iB_j \ge d(B_j, a) \ge \pi(B_0B_j, a_\perp) = j \cdot l_2 \sin \alpha \ge 2$. If integer $j \le -2$, then $A_iB_j \ge d(B_j, a) \ge \pi(B_{-1}B_j, a_\perp) = (-1 -j) \cdot l_2 \sin \alpha \ge 2$. So, for arbitrary $i \in \bbbz$ and $j \in \bbbz / \{0, -1\}$ we proved that $A_iB_j \ge 2$. If $y_2 - y_1 \in (-1, 0]$ it could be analogously shown that $A_iB_j > 2$ for arbitrary $i \in \bbbz$ and $j \in \bbbz / \{0, 1\}$.

Without loss of generality, suppose that $y_2 - y_1 \in [0, 1)$ and $\displaystyle\min_{i, j \in \bbbz} A_iB_j$ is achieved between points $A_{i_0}$ and $B_0$, so $A_{i_0}B_0 < 1$. For every integer $i \ne i_0$ we can apply the triangle inequality: $A_iB_0 + A_{i_0}B_0 > A_iA_{i_0} = |\vec{v}_1| \cdot |i_0 - i| = l_1 \cdot |i_0 - i| \ge l_1 \ge 2$. Then, $A_iB_0 \ge 2 - A_{i_0}B_0 > 1$, because $A_{i_0}B_0 < 1$. So, $A_iB_0 \ne 1 \; \forall i \in \bbbz$.

It remains to prove that $A_iB_{-1} \ne 1 \; \forall i \in \bbbz$. Suppose that $A_iB_{-1} = 1$. Then, for arbitrary integer $i \in \bbbz$ the following chain of inequalities holds: $1 = A_iB_{-1} \ge d(B_{-1}, a) = \pi(B_{-1}B_0, a_\perp) - d(B_0, a) = l_2\sin\alpha - d(B_0, a) \ge l_2\sin\alpha - A_{i_0}B_0 > l_2\sin\alpha - 1 \ge 2 - 1 = 1$. It brings to the contradiction, so $A_iB_{-1} \ne 1$ for all $i \in \bbbz$. If $\displaystyle\min_{i, j \in \bbbz} A_iB_j$ is achieved between points $A_{i_1}$ and $B_{-1}$, the proof is completely analogous. Hence, we have proved that $A_iB_{j} \ne 1 \; \forall i, j \in \bbbz$.

The case $y_2 - y_1 \in (-1, 0]$ could be considered analogously via replacing $B_{-1}$ to $B_{1}$. It finalized the proof of this lemma in the case $\rho(p_1, p_2) \in (0, 1)$.
\qed
\end{proof}

Note that if we consider the symmetric conditions $l_1 \sin \alpha \ge 2$ and $l_2 \ge 2$, then the flat torus $T_{l_1, l_2, \alpha}$ is also perfectly periodic. An important consequence of this lemma is that if the parallelogram's heights are no less than $2$, then the flat torus generated by this parallelogram is perfectly periodic.

\section{Graph construction}

Denote by $\operatorname{diam}(F)$ the diameter of the set $F \in T_{l_1,l_2, \alpha}$ with respect to the metric $\rho$. Let $T_{l_1, l_2, \alpha} = F_1 \sqcup F_2 \sqcup F_3 \sqcup \dots \sqcup F_n$ be a partitioning of the perfectly periodic torus where $\forall i \in \{1, 2, \dots , n\}$ $F_i$ is a measurable set, $\operatorname{diam}(F_i) < 1$. Let $p_1, p_2, \dots, p_n \in T_{l_1, l_2, \alpha}$ be arbitrary points such that $p_i \in F_i$ for every $i \in \{1, 2, \dots , n\}$.

Consider some undirected graph $G = (V, E)$, where the set of vertices $V = \{p_1, p_2, \dots, p_n\}$, and the set of edges $E \subset V \times V$ satisfy the following rule for all $1 \le i < j \le n$:

$$ (p_i, p_j) \notin E \Rightarrow \rho(q_1, q_2) \ne 1 \; \forall q_1 \in F_i, q_2 \in F_j.$$

Notice that the structure of this graph strongly depends on $F_1, F_2, \dots, F_n$. Next lemma presents the idea to obtain a lower bound of $m_1(\bbbr^2)$ via independent sets in these graphs.

\begin{lemma}
    Let $M \subset \{p_1, p_2, \dots, p_n\}$ be an independent set in the undirected graph $G = (V, E)$. This means that for any distinct $a, b \in M$ $(a, b) \notin E$. Let $F_i$ denote the above mentioned set associated with vertex $p_i$ for $i = 1, 2, \dots, n$. Then, 
    $$m_1(\bbbr^2) \ge \frac{\sum\limits_{p_i \in M} \lambda_2(F_i)}{\sum\limits_{j = 1}^n \lambda_2(F_j)},$$
    where $\lambda_2$ represents the 2-dimensional Lebesgue measure (i.e., area). 

\label{lemma_graph_construction}
\end{lemma}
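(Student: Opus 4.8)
The plan is to construct an explicit $1$-avoiding measurable set $\hat{A} \subset \bbbr^2$ whose density equals the claimed ratio, and then invoke the definition of $m_1(\bbbr^2)$ as a supremum. First I would set $A = \bigsqcup_{p_i \in M} F_i \subset T_{l_1, l_2, \alpha}$, a measurable subset of the torus. The key claim is that $A$ is $1$-avoiding on the torus with respect to $\rho$: take any two points $q_1, q_2 \in A$ with $q_1 \in F_i$, $q_2 \in F_j$. If $i = j$ then $\rho(q_1, q_2) \le \operatorname{diam}(F_i) < 1$, so $\rho(q_1,q_2) \ne 1$. If $i \ne j$, then since $M$ is independent, $(p_i, p_j) \notin E$, and the defining rule of $G$ gives $\rho(q_1, q_2) \ne 1$. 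Hence no two points of $A$ are at $\rho$-distance exactly $1$.

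Next I would use the hypothesis that $T_{l_1, l_2, \alpha}$ is perfectly periodic (which is part of the setup: the graph is built on a perfectly periodic torus) to lift $A$ to the plane. Define $\hat{A} = \{\, x\vec{v}_1 + y\vec{v}_2 + m\vec{v}_1 + n\vec{v}_2 : (x,y) \in A,\ m,n \in \bbbz \,\}$, i.e.\ place a copy of $A$ in every lattice cell. By perfect periodicity, for any two points of $\hat{A}$ the Euclidean distance between them equals $1$ only if the corresponding $\rho$-distance on the torus is $1$; since $A$ avoids $\rho$-distance $1$, the lifted set $\hat{A}$ avoids Euclidean distance $1$. Thus $\hat{A}$ is a $1$-avoiding planar set, and it is measurable as a countable union of translates of a measurable set.

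Then I would compute $\overline{\delta(\hat{A})}$. Since $\hat{A}$ is periodic with respect to the lattice generated by $\vec{v}_1, \vec{v}_2$, its upper density equals the fraction of the fundamental parallelogram covered by $A$:
$$\overline{\delta(\hat{A})} = \frac{\lambda_2(A)}{\lambda_2(P)} = \frac{\sum_{p_i \in M} \lambda_2(F_i)}{\sum_{j=1}^n \lambda_2(F_j)},$$
where $P$ is the fundamental parallelogram, using that $\{F_j\}$ partitions the torus so $\lambda_2(P) = \sum_j \lambda_2(F_j)$, and that area is preserved when identifying the torus with $P$. This standard density-equals-cell-average fact for lattice-periodic sets can be justified by a Fubini/tiling argument: a ball $B_2(x,R)$ contains roughly $\lambda_2(B_2(x,R))/\lambda_2(P)$ whole cells with a boundary error of order $R$, which is negligible against the area $\sim R^2$ as $R \to \infty$. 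Finally, since $\hat{A}$ is $1$-avoiding and measurable, $m_1(\bbbr^2) \ge \overline{\delta(\hat{A})}$, which is the desired inequality.

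The main obstacle is the density computation, i.e.\ rigorously passing from "$A$ occupies a certain fraction of one cell" to "$\hat{A}$ has that upper density in the plane." This requires the lattice-periodicity boundary-estimate argument sketched above; everything else ($A$ being $1$-avoiding on the torus, lifting via perfect periodicity) is essentially a direct unwinding of the definitions already established in the excerpt. I would either cite this periodicity-density fact as standard or spell out the counting-of-cells estimate, being careful that the $\limsup$ in the definition of upper density is in fact a genuine limit here because of periodicity.
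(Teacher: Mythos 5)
Your proposal is correct and follows essentially the same route as the paper: take $A = \bigsqcup_{p_i \in M} F_i$, use the diameter bound for same-cell pairs and the edge rule plus independence for cross-cell pairs, lift to the plane via perfect periodicity, and read off the density from the fundamental cell. The only difference is organizational (you first establish that $A$ avoids $\rho$-distance $1$ on the torus and then lift, while the paper argues directly by contradiction on two points of $\hat{A}$), and you are somewhat more explicit than the paper about justifying the periodic-density computation.
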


\begin{proof}

Consider the such set $A = \bigsqcup\limits_{p_i \in M} F_i$. The Lebesgue measure of this set on the plane is given by $\lambda_2(A) = \sum\limits_{p_i \in M} \lambda_2(F_i)$. Similarly, the Lebesgue measure of the parallelogram $B$ is $\lambda_2(B) = \sum\limits_{j = 1}^n \lambda_2(F_j)$. Consider the set $\hat{A} \in \bbbr^2$ generated by replicating $A$ on a regular lattice with vectors $\vec{v}_1$, $\vec{v}_2$. Since $A$ is measurable, $\hat{A}$ is also measurable. Due to the periodic structure of $\hat{A}$ we have $\overline{\delta(\hat{A})} = \lambda_2(A) / \lambda_2(B)$.

Therefore, if we can prove that $\hat{A}$ is 1-avoiding, then it follows that $$m_1(\bbbr^2) \ge \lambda_2(A) / \lambda_2(B) = \frac{\sum\limits_{p_i \in M} \lambda_2(F_i)}{\sum\limits_{j = 1}^n \lambda_2(F_j)}.$$

Let $P, Q \in \hat{A} \subset \bbbr^2$ be such that $\rho(P, Q) = 1$. Due to the construction of $\hat{A}$, there exist points $P_0, Q_0 \in A$ such that $\overline{P_0P} = a_1\vec{v}_1 + a_2\vec{v}_2$ and $\overline{Q_0Q} = b_1\vec{v}_1 + b_2\vec{v}_2$, where $a_1, a_2, b_1, b_2 \in \bbbz$. Furthermore, let $P_0 \in F_i$ and $Q_0 \in F_j$, where $F_i, F_j \subset A$.

If $i = j$, then $\rho(P_0, Q_0) < 1$, because $\operatorname{diam}(F_i) < 1$. Since $\rho(P_0, Q_0) < 1$ and the torus is perfectly periodic, it follows that $\rho(P, Q) \ne 1$. This contradicts our initial assumption that $\rho(P, Q) = 1$.

If $i \ne j$, then $(p_i, p_j) \notin  E$, because $M$ is an independent set in the graph $G$. Therefore, by the construction of $G$, there are no two points $p \in F_i, q \in F_j$ such that $\rho(p, q) < 1$. Consequently, $\rho(P_0, Q_0) < 1$. Since the torus is perfectly periodic, this implies that $\rho(P, Q) \ne 1$. Last inequality this also leads to the contradiction with our initial assumption.

We have considered all possible cases and shown that no two points $P, Q \in \hat{A} \subset \bbbr^2$ satisfy $\rho(P, Q) = 1$. Therefore, $\hat{A}$ is 1-avoiding.
\qed
\end{proof}

This lemma allows us to reformulate the problem of estimating $m_1(\bbbr^2)$ as a problem of finding a maximal independent set in a corresponding graph. While this section has not explicitly defined the set of edges for the graph, it has described a sufficiently large class of graphs based on the condition for the set $E$. To obtain the most accurate $m_1(\bbbr^2)$ lower estimates, however, it is desirable to minimize the number of edges in these graphs. This is because a smaller number of edges leads to a larger maximum independent set size, thereby strengthening the lower bound. The precise construction of this graph are proposed and discussed in the following section.

\section{Flat torus partitioning into equal hexagons}

This section proposes a construction of the graph mentioned in the previous section for an arbitrary flat torus $T_{l_1, l_2, \alpha}$ parameterized by side lengths $l_1, l_2$ and angle $\alpha \in (0, \pi/2]$ between them.

\begin{figure}
\begin{tabular}{cc} 
    {\includegraphics[width=0.48\textwidth]{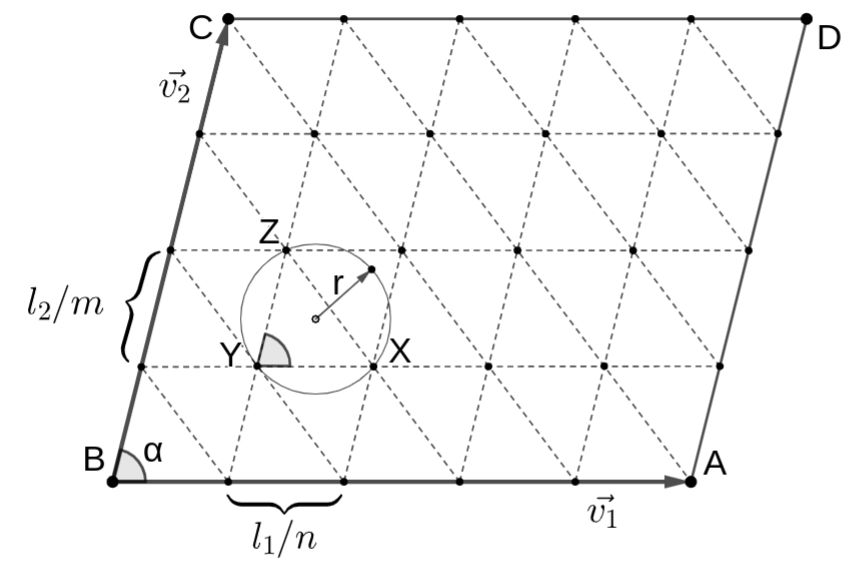}} &
    {\includegraphics[width=0.46\textwidth]{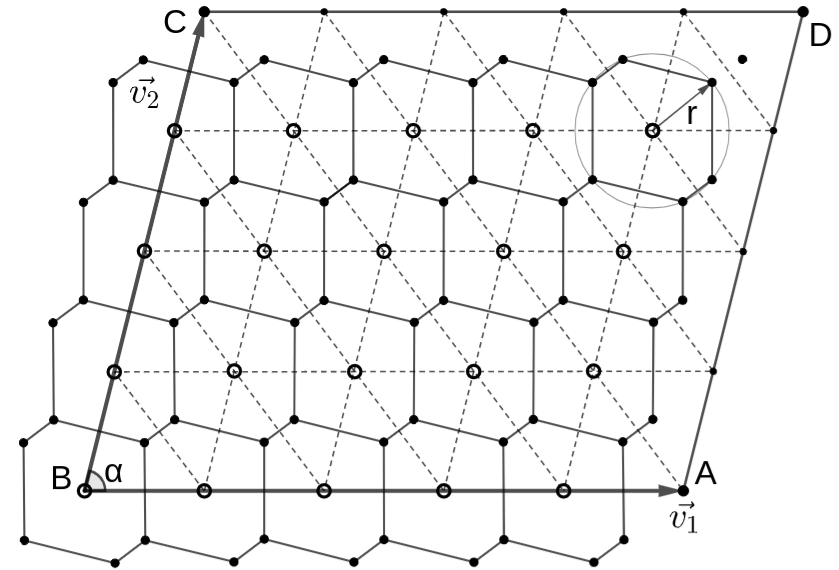}}
\end{tabular}
\caption{Graph construction concepts.} \label{graph_construction}
\end{figure}

Without loss of generality, this torus can be represented as the parallelogram $ABCD$, such that $\overline{BA} = \vec{v}_1$, $\overline{BC} = \vec{v}_2$, $|AB| = l_1$, $|BC| = l_2$ and $\angle ABC = \alpha \in (0, \pi/2]$.

We introduce parameters $n, m \in \bbbn$ to represent the grid discretization of the flat torus. It allows us to divide sides $AB$ and $BC$ into $n$ and $m$ equal parts respectively. Actually, we consider the $\varepsilon$-net in the metric space $\left(T_{l_1, l_2, \alpha}, \rho \right)$. 

Then, we define the set of graph vertices $V_{n,m}$ as follows:
$$V_{n,m} = \left\{\frac{a}{n} \vec{v}_1 + \frac{b}{m} \vec{v}_2 : a \in \{0, 1, \dots , n - 1\}, b \in \{0, 1, \dots , m - 1\}  \right\}.$$

These $|V_{n, m}| = n \cdot m$ points generate a triangulation of the parallelogram into equal triangles with sides $l_1 / n$, $l_2 / m$, and angles $\alpha$ between them (see Fig. ~\ref{graph_construction}, left part).

Consider an arbitrary triangle from this triangulation. Let $XYZ$ be one such triangle with sides $XY = a = l_1 / n$, $YZ = b = l_2 / m$ and the angle $ \angle XYZ = \alpha$ between them. It's third side can be computed via the Cosine rule: 
$XZ = c = \sqrt{a^2 + b^2 - 2ab \cos{\alpha}}$. Then, the radius $r$ of the circumcircle of $XYZ$ can be calculated via the Sine rule: $r = \frac{XZ}{2\sin\alpha} = \frac{c}{2\sin\alpha}$, because the side $XZ$ is opposite to the angle $\angle XYZ$.

Next, we consider the Voronoi diagram (see Fig.~\ref{graph_construction}, right part) constructed for the set of points $V_{n, m}$ using the $\rho$ metric. The boundaries of the Voronoi cells are segments of the perpendicular bisectors of the sides of the triangulation described above. Since $|V_{n, m}| = nm$ and all triangles in the triangulation are equal, the Voronoi sets are equal, disjoint hexagons. The center of each hexagon coincides with the center of the circumcircle of the corresponding triangle in the triangulation, and these centers are precisely the points in the set $V_{n, m}$ (see Fig.~\ref{graph_construction}, right part).

Denote these hexagons by $H_1, H_2, \dots, H_{nm}$. This construction represents the flat torus partitioning into $nm$ parts: $T_{l_1, l_2, \alpha} = H_1 \sqcup H_2 \sqcup \dots H_{nm}$. The radius of the circumcircle of each $H_i$ is exactly $r$, which can be calculated as described above, due to the geometric structure of this partitioning. The diameter of every hexagon $H_i$ is achieved on three diagonals: $\operatorname{diam}(H_i) = 2r$. Therefore, this partitioning satisfies the properties of Lemma \ref{lemma_graph_construction} if and only if $2r < 1$.

The set $E_{nm}$ of edges for the corresponding graph is defined as follows:
$$E_{n, m} = \left\{ (v_1, v_2) \in V_{n, m} \times V_{n, m}: \rho(v_1, v_2) \in [1 - 2r, 1 + 2r] \right\}.$$

Note that in this construction, an edge could be drawn if and only if $F_i \cup F_j$ contains a unit distance. Since experiments showed that this does not lead to a noticeable improvement in the estimate, the set of edges was computed using a simplified algorithm. The graph $G_{n,m}$ contains extra edges because it was constructed for covering the torus with circles, not for partitioning into hexagons.

The next lemma guarantees that constructed undirected graph $G_{n, m} = (V_{n, m}, E_{n, m})$ satisfies the graph edge construction rules described at the beginning of the previous section.

\begin{lemma}
For the graph $G_{n, m}$ defined above, if two distinct vertices $v_1, v_2$ satisfy $(v_1, v_2) \notin E_{n, m}$, then $\rho(q_1, q_2) \ne 1$ for all $q_1 \in F_1, q_2 \in F_2$, where $F_1, F_2$ are the corresponding equal hexagons with centers at $v_1, v_2$ respectively.

\label{lemma_distance_inequality}
\end{lemma}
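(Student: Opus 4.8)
Lemma~\ref{lemma_distance_inequality} is essentially a triangle-inequality statement relating distances between hexagon centers and distances between arbitrary points of those hexagons, so the plan is to contrapose and bound. Suppose, for contradiction, that $(v_1, v_2) \notin E_{n,m}$ yet there exist $q_1 \in F_1$, $q_2 \in F_2$ with $\rho(q_1, q_2) = 1$. First I would recall that each hexagon $F_i = H_i$ has circumradius $r$, so $\rho(v_i, q_i) \le r$ for $i = 1, 2$, since $q_i$ lies in the hexagon centered at $v_i$ and the hexagon is contained in the closed disc of radius $r$ about its center (the diameter being $2r$, achieved on the long diagonals). The metric $\rho$ is a genuine metric on the flat torus (it is the quotient metric of the Euclidean metric), so the triangle inequality applies.

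The core step is then a double application of the triangle inequality:
\begin{align*}
\rho(v_1, v_2) &\le \rho(v_1, q_1) + \rho(q_1, q_2) + \rho(q_2, v_2) \le r + 1 + r = 1 + 2r, \\
\rho(v_1, v_2) &\ge \rho(q_1, q_2) - \rho(q_1, v_1) - \rho(v_2, q_2) \ge 1 - r - r = 1 - 2r.
\end{align*}
Hence $\rho(v_1, v_2) \in [1 - 2r, 1 + 2r]$, which by the definition of $E_{n,m}$ forces $(v_1, v_2) \in E_{n,m}$, contradicting the assumption. This yields the claim.

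The only subtlety worth checking carefully — and the step I expect to be the mild obstacle rather than anything deep — is the justification that $\rho$ satisfies the triangle inequality and that $\rho(v_i, q_i) \le r$. For the latter, one should note that the Voronoi cell $H_i$ is inscribed in the circumcircle (of radius $r$) of the corresponding triangulation triangle, centered at $v_i$; every point of $H_i$ is within Euclidean distance $r$ of $v_i$ along a geodesic representative, and since $\rho$ is the infimum over lattice translates it is bounded above by that Euclidean distance, giving $\rho(v_i, q_i) \le r$. For the triangle inequality, $\rho$ is defined as $\min_{m,n \in \mathbb{Z}} |\,\cdot + m\vec v_1 + n\vec v_2|$, i.e.\ the distance induced on the quotient group $\mathbb{R}^2 / (\mathbb{Z}\vec v_1 + \mathbb{Z}\vec v_2)$, which is standard to be a metric; alternatively one can verify it directly by choosing near-optimal lattice representatives for each pair and adding. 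Once these two facts are in hand, the proof is the two-line chain of inequalities above.
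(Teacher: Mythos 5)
Your proposal is correct and follows essentially the same route as the paper's own proof: assume a unit-distance pair $q_1,q_2$ exists, use $\rho(v_i,q_i)\le r$ (since each hexagon lies in the disc of radius $r$ about its center), and apply the triangle inequality in both directions to force $\rho(v_1,v_2)\in[1-2r,1+2r]$, contradicting $(v_1,v_2)\notin E_{n,m}$. Your extra remarks justifying that $\rho$ is a genuine (quotient) metric and that the Voronoi hexagon is contained in the circumdisc are sound and merely make explicit what the paper leaves implicit.
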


\begin{proof}
    Consider two arbitrary distinct vertices $v_1, v_2$ of $G_{n,m}$ and suppose that $(v_1, v_2) \notin E_{n, m}$. This implies $\rho(v_1, v_2) \notin [1 - 2r, 1 + 2r]$ according to the definition of $E_{n, m}$ .
    
    Now, suppose there exist two points $q_1, q_2$ such that $q_1 \in F_1$, $q_2 \in F_2$, and $\rho(q_1, q_2) = 1$. Since $q_1 \in F_1$, we have $\rho(q_1, v_1) \le r$ because $F_1 \subset \omega(v_1, r)$, where $\omega(v_1, r)$ is the disk of radius $r$ centered at $v_1$. Analogously, $\rho(q_2, v_2) \le r$. Applying the triangle inequality to the metric $\rho$, we obtain:

    $$1 = \rho(q_1, q_2) \le \rho(q_1, v_1) + \rho(v_1, v_2) + \rho(v_2, q_2) \le 2r + \rho(v_1, v_2).$$

    Hence, $1 - 2r \le \rho(v_1, v_2)$. Similarly, applying the triangle inequality again yields:

    $$\rho(v_1, v_2) \le \rho(v_1, q_1) + \rho(q_1, q_2) + \rho(q_2, v_2) \le 2r + 1.$$

    Combining these inequalities, we have $1 - 2r \le \rho(v_1, v_2) \le 1 + 2r$. This contradicts our earlier assumption that $\rho(v_1, v_2) \notin [1 - 2r, 1 + 2r]$. Therefore, our initial supposition that such points $q_1$ and $q_2$ exist must be false. This completes the proof of this lemma.
    \qed  
\end{proof}

These lemmas lead to the main theorem of this paper, which will be crucial and useful in our experimental pipeline.

\begin{theorem}
Let $l_1, l_2 \in \bbbr_+$ be side lengths and $\alpha \in (0, \pi/2]$ be the angle parameterizing the perfectly periodic torus $T_{l_1, l_2, \alpha}$, and let $n, m \in \bbbn$, such that $2r < 1$, where $r$ is the radius of the circumcircle of triangles in the constructed torus triangulation. Next, $G_{n, m} = (V_{n, m}, E_{n, m})$ is the undirected graph constructed via the described procedure.

If $M \subset V_{n, m}$ is an independent set of the graph $G_{n, m}$, then

$$m_1(\bbbr^2) \ge \frac{|M|}{n \cdot m}.$$

\label{main_theorem}
\end{theorem}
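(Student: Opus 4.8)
The plan is to assemble the already-established lemmas into a short chain. The theorem is essentially a packaging of \Cref{lemma_graph_construction}, \Cref{lemma_distance_inequality}, and the geometric facts about the hexagonal partitioning developed immediately before the statement, so almost no new work is required.

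First I would recall the partitioning: the points of $V_{n,m}$ generate a triangulation of the parallelogram $ABCD$ into $nm$ congruent triangles, and the Voronoi diagram of $V_{n,m}$ with respect to $\rho$ partitions the torus into $nm$ congruent hexagons $H_1, \dots, H_{nm}$, each circumscribed by a disk of radius $r$ centered at the corresponding point of $V_{n,m}$. Since $\operatorname{diam}(H_i) = 2r$ and we are given $2r < 1$, every $H_i$ has diameter strictly less than $1$, so setting $F_i := H_i$ and picking $p_i \in F_i$ to be the center (the point of $V_{n,m}$) gives exactly the data required by \Cref{lemma_graph_construction}. Next I would verify the edge condition for $G_{n,m}$: this is precisely the content of \Cref{lemma_distance_inequality}, which states that $(v_i, v_j) \notin E_{n,m}$ implies $\rho(q_1, q_2) \ne 1$ for all $q_1 \in F_i$, $q_2 \in F_j$ — i.e. the defining rule $(p_i, p_j) \notin E \Rightarrow \rho(q_1,q_2)\ne 1$ from the graph-construction section holds. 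Here I would also note that the torus is perfectly periodic by hypothesis, which is the remaining ingredient \Cref{lemma_graph_construction} needs.

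With these verifications in place, I would apply \Cref{lemma_graph_construction} directly to the independent set $M \subset V_{n,m}$, obtaining
$$m_1(\bbbr^2) \ge \frac{\sum_{p_i \in M} \lambda_2(F_i)}{\sum_{j=1}^{nm} \lambda_2(F_j)}.$$
The final step is to simplify this ratio: because all $nm$ hexagons are congruent they have equal area, say $\lambda_2(H_i) = S$ for all $i$, so the numerator is $|M| \cdot S$ and the denominator is $nm \cdot S$, whence the ratio collapses to $|M|/(nm)$, giving the claimed bound.

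I do not anticipate a genuine obstacle, since every nontrivial assertion has already been proved; the only points needing care are bookkeeping ones — confirming that the hypothesis $2r < 1$ is exactly what makes $\operatorname{diam}(F_i) < 1$, that the congruence of the hexagons (established via the congruence of the triangulation triangles and the translation-invariance of $\rho$) legitimately cancels the areas, and that $G_{n,m}$ indeed falls into the class of graphs allowed in \Cref{lemma_graph_construction} via \Cref{lemma_distance_inequality}. If anything is delicate it is making the congruence-of-hexagons claim airtight, but this was already argued in the text preceding the theorem, so it can simply be cited.
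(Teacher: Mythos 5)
Your proposal is correct and follows essentially the same route as the paper's proof: verify the hypotheses of \Cref{lemma_graph_construction} by taking $F_i = H_i$ with $\operatorname{diam}(H_i) = 2r < 1$, use \Cref{lemma_distance_inequality} to confirm the edge condition, and then cancel the equal hexagon areas to obtain $|M|/(nm)$. No gaps.
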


\begin{proof}
The torus partitioning into equals hexagons $T_{l_1, l_2, \alpha} = H_1 \sqcup H_2 \sqcup \dots \sqcup H_{nm}$ implies that the areas of all hexagons are equal.

Lemma \ref{lemma_distance_inequality} proved that the graph $G_{n, m}$ described in this section satisfies the conditions of Lemma \ref{lemma_graph_construction}. Note that the diameter of $F_i$ (with respect to the metric $\rho$) for any hexagon $F_i$ is no greater than $2r$ (the length of each its diagonal via Euclidean metric). This allows us to apply Lemma \ref{lemma_graph_construction} to the graph $G_{n, m}$ and the aforementioned torus partitioning into equal hexagons, because $\operatorname{diam}(F_i) \le 2r < 1$, where last inequality follows from the conditions of this theorem.

\begin{figure}
\begin{center}
\includegraphics[width=0.5\textwidth]{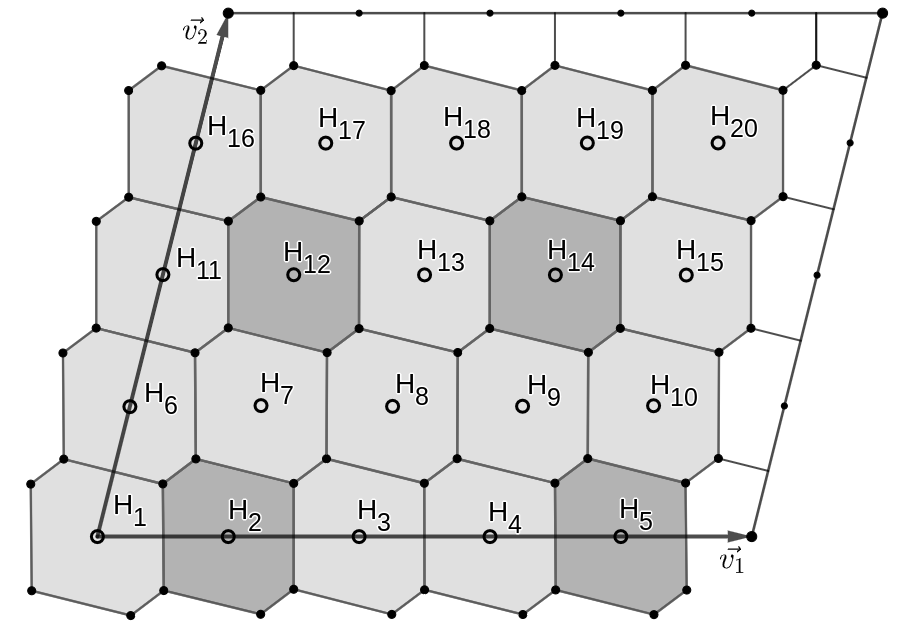}
\caption{The construction of the graph $G_{n, m}$ in case of $n = 5, m = 4$. Dark hexagons denote the independent set $M$. This example leads to the following estimation: $m_1(\bbbr^2) \ge \frac{|M|}{nm} = 4 / 20 = 0.2$.} \label{theorem_proof}
\end{center}
\end{figure}

Next, we can rewrite $\sum\limits_{i = 1}^{nm} H_{nm} = nm \cdot \lambda_2(H_1)$, because all hexagons are equal and each hexagon is measurable. Here, the Lebesgue measure $\lambda_2(H_1)$ equals to the area of the hexagon $H_1$. Since $M$ is an independent set in the graph $G$, we have $\sum\limits_{p_i \in M} \lambda_2(H_i) = |M| \cdot \lambda_2(H_1)$ due to the equality of all hexagons. Figure~\ref{theorem_proof} illustrates the main idea of this theorem. Applying the Lemma~\ref{lemma_graph_construction} to this torus partitioning, the corresponding graph and its independent set $M$, we obtain that

$$m_1(\bbbr^2) \ge \frac{\sum\limits_{p_i \in M} \lambda_2(H_i)}{\sum\limits_{j = 1}^{nm} \lambda_2(H_j)} = \frac{|M| \cdot \lambda_2(H_1)}{nm \cdot \lambda_2(H_1)} = \frac{|M|}{n \cdot m}.$$
\qed

\end{proof}

\section{Experiments \& Results}

\subsection{Experiment scheme}
According to Theorem~\ref{main_theorem}, the experiment scheme for the $m_1(\bbbr^2)$ estimation is as follows:

\begin{enumerate}
    \item Choose side lengths $l_1, l_2 \in \bbbr_+$ and angle $\alpha \in (0, \pi/2]$, such that flat torus $T_{l_1, l_2, \alpha}$ is perfectly periodic. In other words, these variable should satisfy one of following two cases: 
    \begin{enumerate}
    \item $l_1 \sin \alpha \ge 2, l_2 \ge 2$,
    \item $l_1 \ge 2, l_2 \sin \alpha \ge 2.$
    \end{enumerate}

    \item Choose integer parameters $n, m \in \bbbn$ that correspond to the torus grid discretization, such that the above mentioned circumscribed radius $r < 1/2$.

    \item Construct the graph $G_{n, m} = \left(V_{n, m}, E_{n, m} \right)$. For each pair of vertices the distance between them is computed using Lemma~\ref{metric_computation}.

    \item Find the maximal independent set $M \subset V_{n, m}$ via one of maximum independent set solvers.

    \item This provide us with lower estimate $m_1(\bbbr^2) \ge \frac{|M|}{n \cdot m}$.
    
\end{enumerate}

Let $V_{n, m} = \{v_{0, 0}, v_{1, 0}, \dots, v_{n, 0}, v_{0, 1}, \dots, v_{n-1, m-1} \} \subset T_{l_1, l_2, \alpha}$, where $v_{i, j} = \left( \frac{i}{n}, \frac{j}{m} \right) \in [0, 1)^2$, be the vertices (parametrized by their affine coordinates) of the graph $G_{n, m} = \left(V_{n, m}, E_{n, m} \right)$ constructed on the flat torus $T_{l_1, l_2, \alpha}$.

The following lemma clarifies the $d$-regularity of the constructed graph based on its invariance under shifts by vectors $\frac{1}{n}\vec{v}_1$ and $\frac{1}{m}\vec{v}_2$, respectively. This property also accelerates the construction of $G_{n, m}$ in the proposed experimental pipeline.

\begin{lemma}
Let $V_{0, 0} = \left \{ v_{s_1, t_1}, v_{s_2, t_2}, \dots, v_{s_k, t_k}  \right \} \subset V_{n, m}$ be the set of all neighbors of vertex $v_{0, 0}$ in the graph $G_{n, m}$. Then, for all $i \in \{0, 1, \dots, n - 1 \}$ and $j \in \{0, 1, \dots, m - 1 \}$ the set $V_{i, j} = \left \{ v_{(s + i) \bmod n, (t + j) \bmod m} : \; \forall \; v_{s, t} \in V_{0, 0} \right \}$ is the set of all neighbors of vertex $v_{i, j}$ in the graph $G_{n, m}$.

\label{lemma_fast_graph_construction}
\end{lemma}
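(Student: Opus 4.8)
The plan is to reduce the statement to a single invariance property of the metric $\rho$ on the discretized torus, namely that $\rho$ is invariant under the translations $p \mapsto p + \tfrac{1}{n}\vec v_1$ and $p \mapsto p + \tfrac{1}{m}\vec v_2$ (interpreted modulo the lattice), and then transport adjacency along such a translation. Concretely, fix $i \in \{0,\dots,n-1\}$ and $j \in \{0,\dots,m-1\}$ and define the map $\tau \colon V_{n,m} \to V_{n,m}$ by $\tau(v_{s,t}) = v_{(s+i)\bmod n,\, (t+j)\bmod m}$. The first step is to check that $\tau$ is a well-defined bijection of $V_{n,m}$ (immediate, since it is just coordinatewise addition modulo $n$ and modulo $m$), and that $\tau(v_{0,0}) = v_{i,j}$.

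The heart of the argument is the claim that $\rho(v_{s,t}, v_{s',t'}) = \rho(\tau(v_{s,t}), \tau(v_{s',t'}))$ for all vertices. To see this, recall from the metric definition that
\begin{align*}
\rho(v_{s,t}, v_{s',t'}) &= \min\left\{ \left| \left(\tfrac{s'-s}{n} + m'\right)\vec v_1 + \left(\tfrac{t'-t}{m} + n'\right)\vec v_2 \right| : m', n' \in \bbbz \right\}.
\end{align*}
Adding $i$ to both $s$ and $s'$ changes $\tfrac{s'-s}{n}$ only if exactly one of $s+i$, $s'+i$ wraps past $n$; but such a wrap shifts the affine $x$-coordinate by $\pm 1$, which is precisely absorbed by replacing $m'$ with $m' \mp 1$ in the minimization. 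The same reasoning applies to the $\vec v_2$-component with $j$, $m$, and $n'$. Hence the set of lattice vectors $\{(\tfrac{s'-s}{n}+m')\vec v_1 + (\tfrac{t'-t}{m}+n')\vec v_2 : m',n'\in\bbbz\}$ is literally unchanged by applying $\tau$ to both arguments, so the minimum — i.e. $\rho$ — is unchanged.

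Once the isometry property is in hand, the rest is formal. By definition $E_{n,m} = \{(u,w) : \rho(u,w) \in [1-2r, 1+2r]\}$ depends on the pair only through $\rho(u,w)$, so $(u,w) \in E_{n,m} \iff (\tau(u), \tau(w)) \in E_{n,m}$; that is, $\tau$ is a graph automorphism of $G_{n,m}$. In particular $w$ is a neighbor of $v_{0,0}$ if and only if $\tau(w)$ is a neighbor of $\tau(v_{0,0}) = v_{i,j}$. Applying this to each $v_{s_\ell, t_\ell} \in V_{0,0}$ gives that every element of $V_{i,j} = \{\tau(v_{s_\ell,t_\ell})\}$ is a neighbor of $v_{i,j}$, and surjectivity of $\tau$ (together with the "only if" direction) shows these are all the neighbors. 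This establishes the lemma, and as a byproduct the group generated by the two shifts acts transitively on $V_{n,m}$ by automorphisms, so $G_{n,m}$ is vertex-transitive and hence $d$-regular with $d = |V_{0,0}|$.

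The only delicate point — and the one I would write out carefully — is the bookkeeping in the isometry step: verifying that the wrap-around from reducing $s+i$ modulo $n$ is exactly compensated by the free integer $m'$ in the minimization, and likewise for the $\vec v_2$ direction. Everything else is routine. One should also note explicitly that $\tau$ need not fix the specific representatives $m', n'$ achieving the minimum for a given pair, only the minimum value; phrasing the argument as "the two minimization sets coincide" rather than "the optimal $(m',n')$ is preserved" avoids any confusion here.
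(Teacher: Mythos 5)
Your proposal is correct and follows essentially the same route as the paper: the paper likewise establishes $\rho(v_{i,j}, v_{i_1,j_1}) = \rho(v_{s,t}, v_{0,0})$ with $s = (i_1 - i) \bmod n$, $t = (j_1 - j) \bmod m$ by absorbing the integer shift from the modular reduction into the free integers of the minimization, and then transports the edge condition through this equality. Your framing in terms of the translation $\tau$ being a graph automorphism is just a slightly more structural packaging of the identical computation.
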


\begin{proof}

First, note that for every $i, i_1 \in \{0,\dots, n - 1\}$ and $j, j_1 \in \{0, \dots, m - 1\}$ the following equation holds: 

\begin{align*}
\rho(v_{i, j}, v_{i_1, j_1}) &= \rho \left( \left(\frac{i}{n}, \frac{j}{m} \right), \left(\frac{i_1}{n}, \frac{j_1}{m} \right) \right) = \\
&= \min \left\{ \left| \left(\frac{i_1 - i}{n} + k_1\right) \cdot \vec{v}_1 + \left( \frac{j_1 - j}{m} + k_2 \right) \cdot \vec{v}_2 \right | : k_1, k_2 \in \bbbz \right\} = \\ 
&= \min \left\{ \left| \left( \frac{s}{n} + k_1 \right) \cdot \vec{v}_1 + \left( \frac{t}{m}  + k_2 \right) \cdot \vec{v}_2 \right | : k_1, k_2 \in \bbbz \right\} = \\ 
&= \rho \left( \left(\frac{s}{n}, \frac{t}{m} \right), \left(0, 0 \right) \right) = \rho \left(v_{s, t}, v_{0, 0} \right),
\end{align*}

where $s = (i_1 - i + n) \bmod n$ and $t = (j_1 - j + m) \bmod m$. Here and after, ``$\bmod$'' denotes the remainder of the modulo division. Using this fact and the edge construction rules for $G_{n, m}$, we obtain the following:

\begin{align*}
    (v_{i, j}, v_{i_1, j_1}) & \in E_{n,m} \Leftrightarrow \rho(v_{i, j}, v_{i_1, j_1}) \in [1 - 2r, 1 + 2r] \Leftrightarrow \\
    & \Leftrightarrow \rho(v_{s, t}, v_{0, 0}) \in [1 - 2r, 1 + 2r] \Leftrightarrow (v_{s, t}, v_{0, 0}) \in E_{n,m} \Leftrightarrow v_{s, t} \in V_{0, 0}. 
\end{align*}

Therefore, we prove that $V_{i_1, j_1}$ is a neighbor of $v_{i, j}$ if and only if $v_{s, t}$ is a neighbor of $v_{0, 0}$. From the equalities $i_1 = (s + i) \bmod n$ and $j_1 = (t + j) \bmod m$,  $$V_{i, j} = \left \{ v_{i_1, j_1} : \; (v_{i, j}, v_{i_1, j_1}) \in E_{n,m} \right \} = \left \{ v_{(s + i) \bmod n, (t + j) \bmod m} : \; \forall \; v_{s, t} \in V_{0, 0} \right \}.$$
\qed
\end{proof}

According to Lemma~\ref{lemma_fast_graph_construction}, we first compute the distances between vertices $v_{0, 0}$ and $v_{s, t}$ for every $s \in \{0, 1, \dots, n - 1\}$ and $t \in \{0, 1, \dots, m - 1\}$, next calculate the distances $\rho(v_{s, t}, v_{0, 0})$ according to Lemma~\ref{metric_computation}. This allows us to construct the set $V_{0, 0}$ of neighbors of $v_{0, 0}$. For each other vertex $v_{i, j}$ in this graph, we construct the set of its neighbors $V_{i, j}$ using Lemma~\ref{lemma_fast_graph_construction} without additional distance computations. Thus, the key importance of the Lemma~\ref{lemma_fast_graph_construction} is that it allows us to construct the graph $G_{n, m}$ with only $n \times m$ times (linear in the number of vertices) of $\rho$ calculations. Additionally, this lemma states that all vertices in the considered graph $G_{n, m}$ have equal degrees, making $G_{n, m}$ a $d$-regular graph for every $n, m \in \bbbn$.

According to Theorem~\ref{main_theorem} and the experiment scheme, a key aspect of estimating lower bound for $m_1(\bbbr^2)$ is finding the Maximum Independent Set (MIS) in the graph $G_{n, m}$ corresponding to any perfectly periodic flat torus. Various approaches for the MIS problem solving are considered in the next subsection.

\subsection{Independent set solvers and machine learning for combinatorial optimization}

In this part of the paper, the Maximum Independent Set (MIS) problem is formally introduced, the solvers used in our analysis are described, and we discuss relevant research in the broader field of deep learning for combinatorial optimization. 

Given an undirected graph $G = (V, E)$, an independent set is a subset of vertices $S \subset V$ where no two vertices in $S$ are connected by an edge. Formally, for all vertices $u, v \in S$, $(u, v) \notin E$. Let each vertex $u \in V$ have an associated weight $w_u$. $IS(G)$ represents the set of all independent sets of $G$. The Maximum Weighted Independent Set (MWIS) problem seeks to find the independent set $S$ that maximizes the sum of its vertex weights:
$$\displaystyle\arg\max_{S \in IS(G)}\sum\limits_{u \in S}w_u.$$

The unweighted MIS problem is a special case of MWIS where all vertices have equal weights, i.e., $w_u = 1$ for all $u \in V$. Both problems are strongly NP-complete \cite{Garey1978S}. According to the aim of this paper, we focus on solving the MIS problem.

For the experiments in this paper the open-source independent set benchmarking suite MIS-Benchmark \cite{mis_benchmark2022} for the NP-hard Maximum Independent Set search was used. This suite integrates several solvers into a single, easily accessible command-line interface using Anaconda \cite{Anaconda2020} and provides a unified input and output format. The framework ensures the proper invocation of the solvers and enables running experiments on various solvers in different configurations.

We consider several solvers (KaMIS, Intel-TreeSearch, DGL-TreeSearch, \\ Learning What to Defer) integrated into MIS-Benchmark in more details. Note that despite good maximal independent set size approximations, none of these methods guarantee that the found set will be a true maximal independent set for a given graph.

\textbf{KaMIS.} KaMIS is an open-source solver specifically designed for the MIS and MWIS problems. It provides support for both unweighted cases \cite{Lamm2017,Hespe2019} and weighted cases \cite{Lamm2019_weight}. The solver utilizes a graph kernelization and an optimized branch-and-bound algorithm to effectively find independent sets. It is important to note that the algorithms and techniques employed differ between the weighted and unweighted scenarios. The MIS-Benchmark \cite{mis_benchmark2022} uses the code unmodified from the official KaMIS repository \cite{kamis_repo}.

\textbf{Intel-TreeSearch.} In the influential paper \cite{Li2018_combopt}, Zhuwen Li et al. (2018) propose a guided tree search algorithm to find maximum independent sets of a graph. Their key idea is to train a graph convolutional network (GCN) \cite{KipfWelling2017_gcn}, which assigns each vertex a probability of belonging to the independent set. The algorithm then greedily and iteratively assigns vertices to the set based on these probabilities. Furthermore, they employ the reduction and local search algorithms from KaMIS to speed up computations. The authors of MIS-Benchmark \cite{mis_benchmark2022} modified some code from the original Intel-TreeSearch implementation \cite{intel_repo} and added command-line flags for more fine-grained control of the solver configuration.

\textbf{DGL-TreeSearch.} The code provided in \cite{Li2018_combopt} may be difficult to read and maintain, which increases the likelihood of errors in evaluation. To address this, the authors of MIS-Benchmark \cite{mis_benchmark2022} re-implemented the tree search using PyTorch \cite{Pytorch2019} and the well-established Deep Graph Library (DGL) \cite{wang2019dgl}. Their implementation aims to provide a more readable and modern solution that benefits from recent improvements in these deep learning libraries. Additionally, they addressed various issues with the original implementation that sometimes deviates from the paper. Notably, they also implemented additional techniques to enhance the search process, such as queue pruning and weighted selection of the next element, as well as multi-GPU functionality.

\textbf{Learning What to Defer.} We also examine Learning What to Defer (LwD), an unsupervised deep reinforcement learning-based solution proposed in \cite{LwD2020}. The concept behind LwD is analogous to tree search, where the algorithm iteratively assigns vertices to the independent set. However, instead of employing a supervised graph convolutional network, it utilizes an unsupervised agent built upon the GraphSAGE architecture \cite{GraphSAGE2017} and trained using Proximal Policy Optimization \cite{Schulman2017ProximalPO}. Unlike other considered approaches, there is no queue of partial solutions. For further details on the algorithm, we refer readers to the original paper and repository \cite{lwd_repo}.

To conduct the experiments in this paper, the code of MIS-Benchmark \cite{mis_benchmark2022} was slightly modified by adding new command-line flags and fixing some typos.

\subsection{Evaluation and comparison of various solvers on flat torus based graphs}

To evaluate and compare the approaches from the previous subsection, we provide experiments with graph datasets. Each graph $G_{n, m}$, constructed from the flat torus $T_{l_1, l_2, \alpha}$, is parameterized via 5 parameters: $l_1, l_2$, and $\alpha$ (flat torus parameters) and grid sizes $n, m$.

Firstly, the Dataset-1 constructed in the following way. We iterate over length values $l_1, l_2 \in [2, 6]$ with a step size of $0.2$ and angle values $\alpha \in [\frac{\pi}{9}; \frac{\pi}{2}]$ with a step size of $\frac{\pi}{36}$. For each combination of $(l_1, l_2, \alpha)$, we check if the flat torus $T_{l_1, l_2, \alpha}$ is perfectly periodic and if $l_1 \le l_2$. If both conditions are met, we add the set of parameters $(l_1, l_2, \alpha)$ to the dataset. This ensures that all objects in the dataset are unique (due to $l_1 \le l_2$) and that each torus is perfectly periodic, allowing for estimation of $m_1(\bbbr^2)$. The size of Dataset-1 is 2986 (see Table~\ref{tab1}). The choice of uniform grid bounds for the graph parameters is conditioned on the following:  $l_{min} \cdot \sin \alpha_{max} = 2 \cdot \sin{\pi/2} = 2$ and $l_{max} \cdot \sin \alpha_{min} = 6 \cdot \sin{\pi/9} \approx 2.05$. This guarantees that all length and angle values (from defined uniform grids) are represented in the dataset as one of graph parameters. Finally, we iterate the experimental scheme over all graphs $G_{n, m}$ (where $n = m = 100$) constructed using the parameters stored in the dataset.

We compare various solvers from the MIS-Benchmark on the constructed Dataset-1. A solver time limit of 100 seconds was set for all approaches, or the machine learning-based approaches (Intel-TreeSearch, DGL-TreeSearch, and LwD), pretrained checkpoints from the MIS-Benchmark repository \cite{mis_benchmark2022} were used.

The results of this comparison are presented in Table~\ref{tab2}. Each column contains the mean independent set size found via the corresponding method over all graphs in a given dataset. Note that all graphs in each considered dataset have the same number of vertices, making the mean independent set size over the dataset an appropriate metric for comparing different approaches to the MIS finding problem.

Next, we identify a local maximum in the mean independent set size found by the four analyzed approaches: KaMIS, DGL-TreeSearch, Intel-TreeSearch, and LwD. Dataset-2 is constructed around this local maximum point on Dataset-1 by decreasing the grid steps for $l_1$, $l_2$, and $\alpha$ values. This dataset is constructed using truncated uniform grids for sides $l_1$, $l_2$ and the angle $\alpha$. We then add to Dataset-2 sets of parameters $(l_1, l_2, \alpha)$ where the flat torus $T_{l_1, l_2, \alpha}$ is perfectly periodic and $l_1 \le l_2$. The grid sizes used in this experiment ($n = m = 100$), corresponding to the number of graph vertices, are the same for all datasets. Subsequent datasets are constructed analogously around the local maximum point $(l_1^*, l_2^*, \alpha^*)$ over the previous dataset.

Table~\ref{tab1} illustrates the construction of the datasets used in this paper. We highlight that steps for $l_1, l_2, \alpha$ were decreased and the grid for each following dataset is more frequent than for the previous one.

\begin{table}

\caption{Datasets constructions that used in the experiments.}\label{tab1}
\begin{tabular}{|c|c|c|c|c|c|c|}
\hline
Dataset &  $l_i$ range & $l_i$ step  & $\alpha$ range  & $\alpha$ step & dataset size & $(l_1^*, l_2^*, \alpha^*)$\\
\hline
Dataset-1 & $[ 2, 6]$ &  0.2 &   $\left[20^\circ, 90^\circ\right]$  & $5^\circ$ & 2986 & $(3.4, 3.4, 60^\circ)$ \\
\hline
Dataset-2 & $[ 3.2, 3.6]$ &  0.02 &  $\left[55^\circ, 65^\circ\right]$  & $2.5^\circ$ & 1155 & $(3.34, 3.34, 60^\circ)$ \\
\hline
Dataset-3 & $[ 3.32, 3.36]$ &  0.004  &  $\left[57.5^\circ, 62.5^\circ\right]$  & $0.5^\circ$ & 726 & $(3.332, 3.336, 60^\circ)$ \\
\hline
Dataset-4 & $[ 3.328, 3.340]$ &  0.001  &  $\left[59.5^\circ, 60.5^\circ\right]$  & $0.25^\circ$ & 455 & $(3.331, 3.331, 60^\circ)$ \\
\hline
\end{tabular}

\end{table}

Table~\ref{tab2} shows the comparison of different methods over various datasets. The bold font indicates the best approach (out of four), the italic font corresponds to the second best method. In could be noticed that KaMIS approach shows best results over all datasets, however, the second place can be taken by any of the other three neural network based approaches. The value in brackets in the Table~\ref{tab2} indicates the relative deviation of the mean independent set size from the best method. Experiment details and full comparison results, including CSV files, are contained in our repository \cite{our_repo}.

\begin{table}
\begin{center}
\caption{Comparison of various MIS finding approaches for the flat tours datasets: average MIS size over graphs with 10000 vertices ($n = m = 100$).}\label{tab2}
\begin{tabular}{|c|c|c|c|c|}
\hline
Dataset name &  KaMIS & DGL-TreeSearch & Intel-TreeSearch & LwD \\
\hline
Dataset-1 & \textbf{1781.9 (1.00)} &  1744.3 (0.98) &  \textit{1757.8 (0.99)} & 1648.3 (0.92) \\
\hline
Dataset-2 & \textbf{2054.2 (1.00)} &  \textit{2026.6 (0.99)} &  1994.0 (0.97) & 1990.3 (0.97)  \\
\hline
Dataset-3 & \textbf{2133.7 (1.00)} &  2095.9 (0.98) &  2069.8 (0.97) & \textit{2098.5 (0.98)} \\
\hline
Dataset-4 & \textbf{2142.2 (1.00)} &  2112.9 (0.99) &  2060.96 (0.96) & \textit{2115.1 (0.99)} \\
\hline
\end{tabular}
\end{center}
\end{table}

This experiment allows us to find the local maximum point step-by-step by decreasing the grid steps on the next dataset. As a result, the local maximum over Dataset-4 is determined to be: $l_1^* = 3.331$, $l_2^* = 3.331$, $\alpha^* = 60^\circ = \pi/3$. Let's visualize on the plot the independent set found by the four considered approaches, depending on the number of vertices in the graph $G_{n, m}$ constructed from the flat torus $T_{l_1^*, l_2^*, \alpha^*}$ with optimal parameters from the last dataset.

After determining the local maximum parameters, we conducted experiments by varying the number of vertices in graphs $G_{n, m}$ obtained via the flat torus $T_{l_1^*, l_2^*, \alpha^*}$. Next, we found the maximal independent set $M$ (using one of the four considered solvers) with size $|M|$. This allowed us to obtain a lower bound estimate of the $m_1(\bbbr^2)$ value as $m_1(\bbbr^2) \ge \frac{|M|}{nm}$. Without loss of generality, we have considered the case $n = m$, so the graph $G_{n, n}$ with $n^2$ vertices was created and the corresponding lower bound was calculated via $m_1(\bbbr^2) \ge \frac{|M|}{n^2}$.

Following figures show the maximal independent sets obtained using various approaches for solving the MIS problem on graphs $G_{n, n}$ constructed via the flat torus $T_{l_1^*, l_2^*, \alpha^*}$ with varying numbers of vertices: KaMIS (Fig.~\ref{kamis_exp_results}), DGL-TreeSearch (Fig.~\ref{dgl_exp_results}), Intel-TreeSearch (Fig.~\ref{intel_exp_results}), LwD (Fig.~\ref{lwd_exp_results}). The experiment settings were the same as those used in the previous experiments with datasets mentioned in Table~\ref{tab1}. More details can be found in the repository related to this paper \cite{our_repo}.

\begin{figure}
\begin{center}
\begin{tabular}{ccc}
    {\includegraphics[scale=0.085]{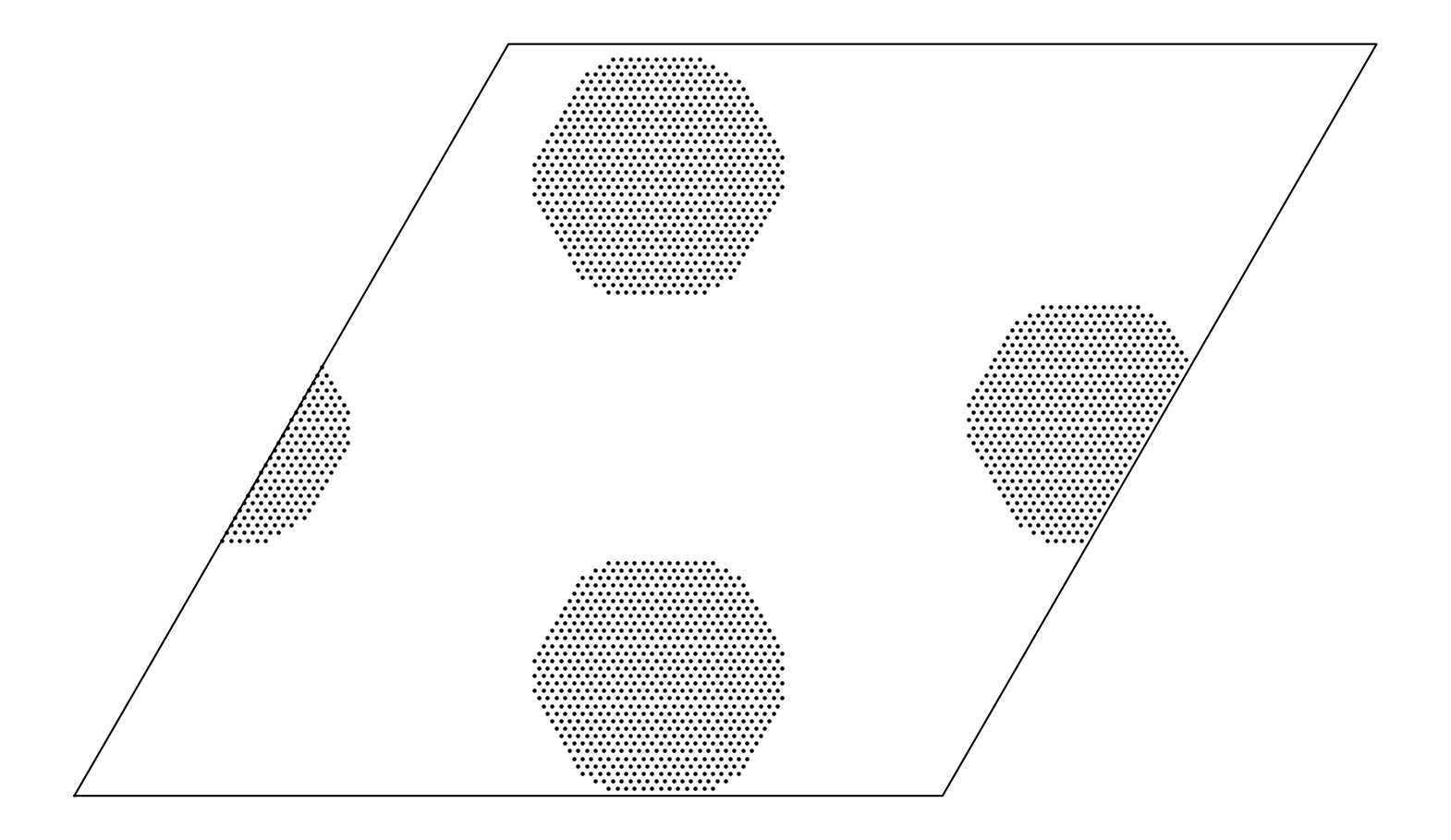}} &
    {\includegraphics[scale=0.085]{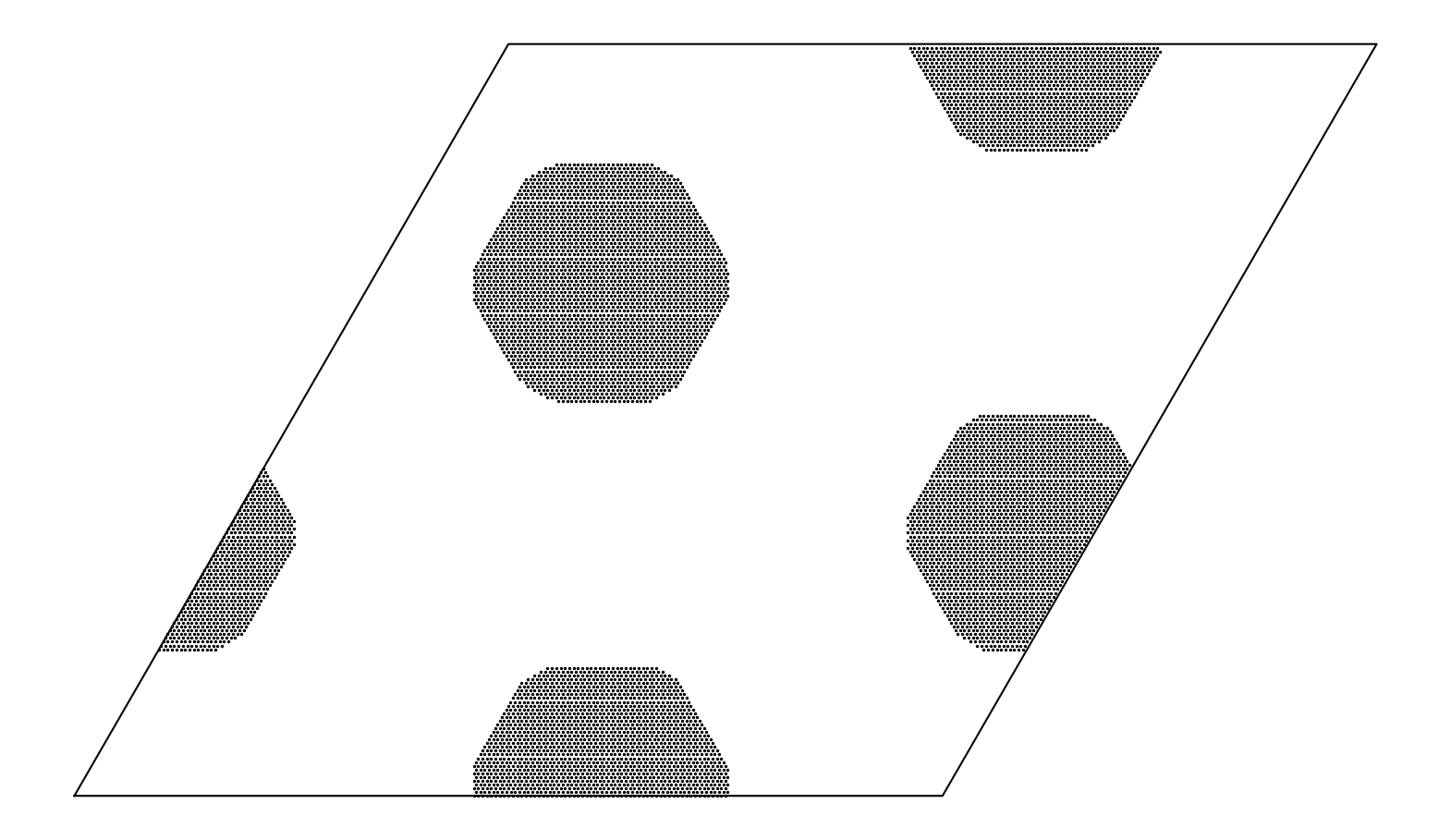}} &
    {\includegraphics[scale=0.085]{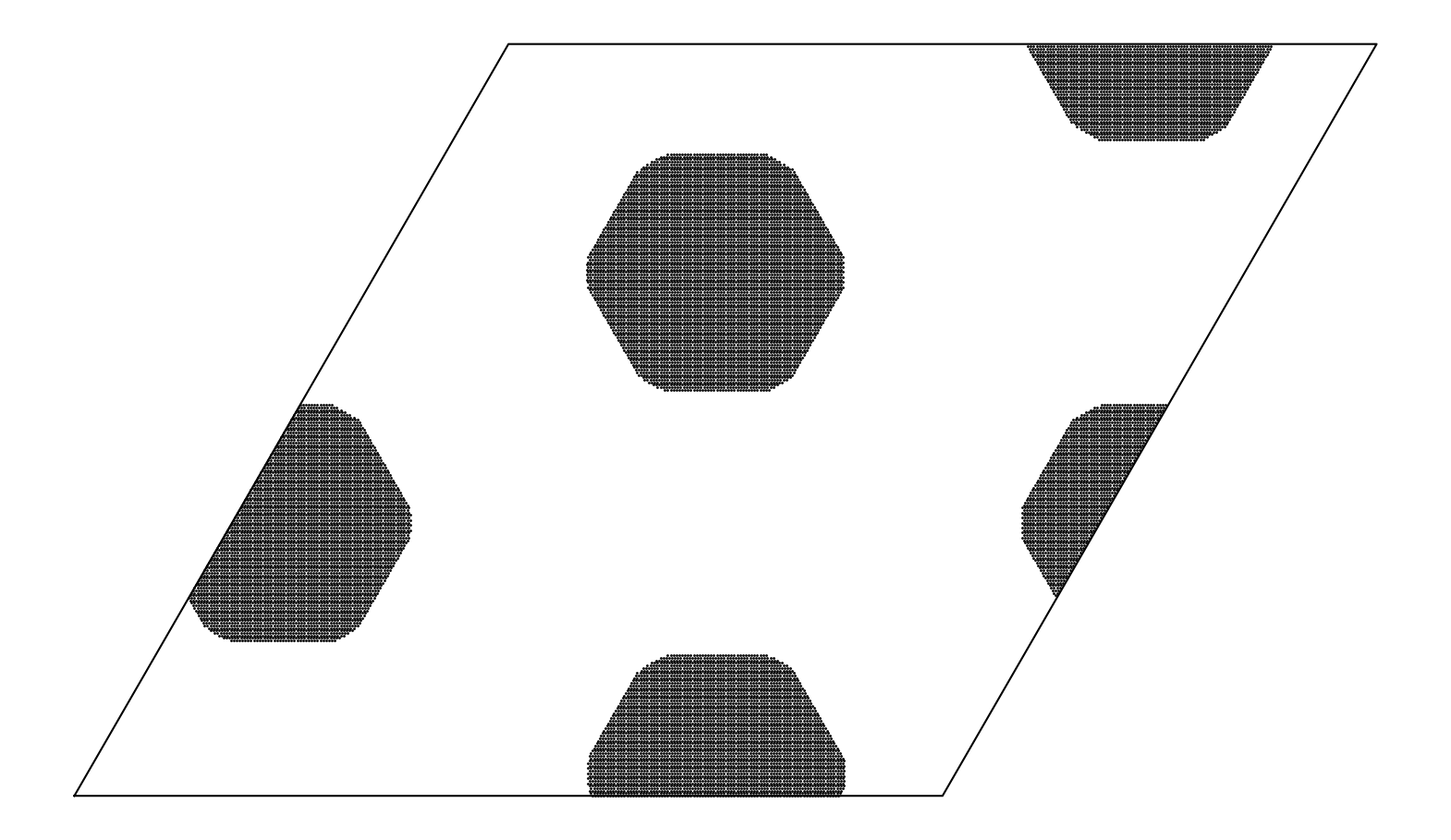}}\\
    $n = 100, |M| = 2193,$  & $n = 200, |M| = 8850,$ & $n = 300, |M| = 19965,$ \\
    $m_1(\bbbr^2) \ge 0.2193$ & $m_1(\bbbr^2) \ge 0.2212$  & $m_1(\bbbr^2) \ge 0.2218$
\end{tabular}
\caption{KaMIS results for $G_{n, n}$ graphs based on torus $T_{l_1^*, l_2^*, \alpha^*}$} \label{kamis_exp_results}
\end{center}
\end{figure}
\begin{figure}
\begin{center}
\begin{tabular}{ccc}
    {\includegraphics[scale=0.085]{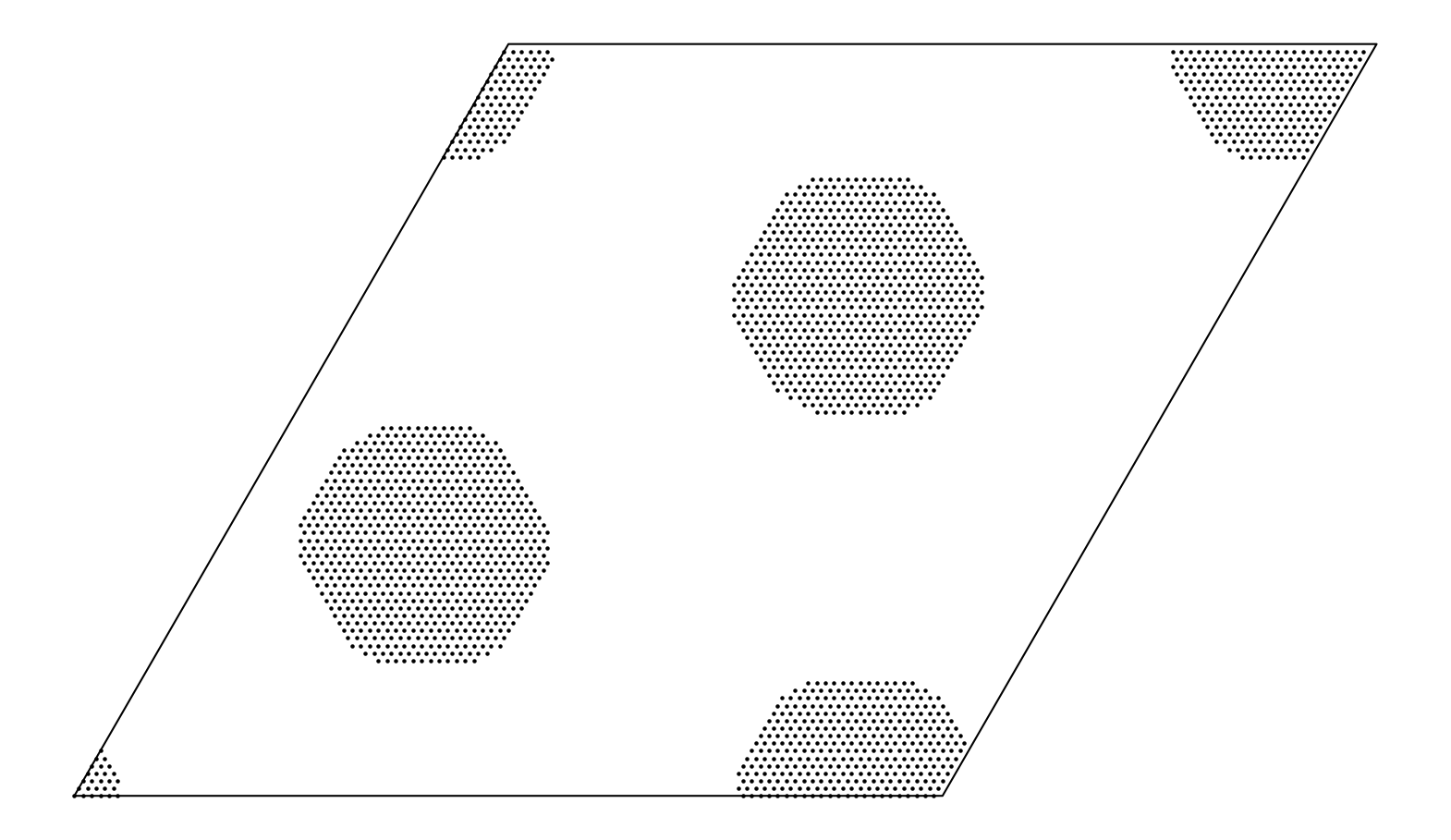}} &
    {\includegraphics[scale=0.085]{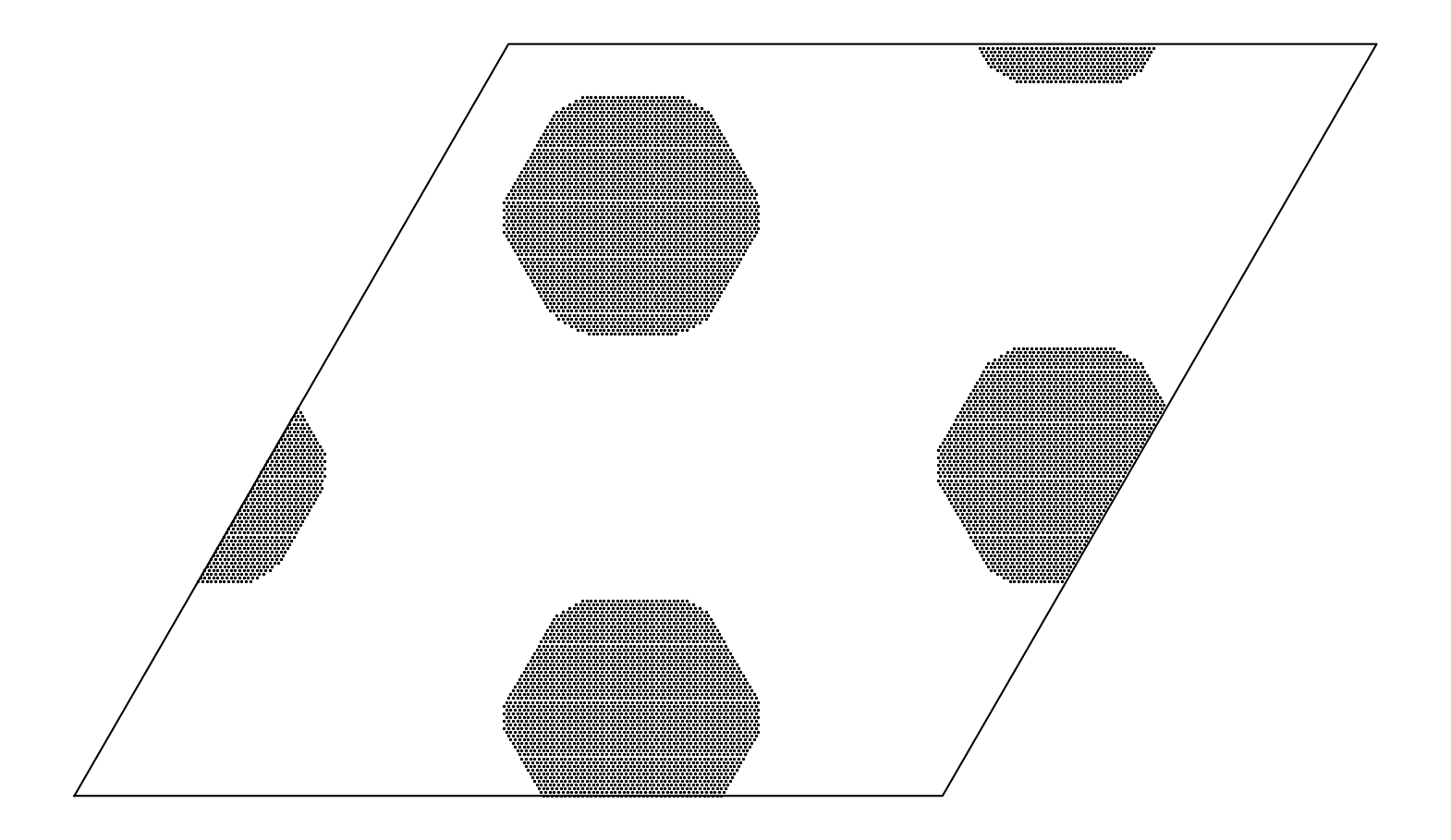}} &
    {\includegraphics[scale=0.085]{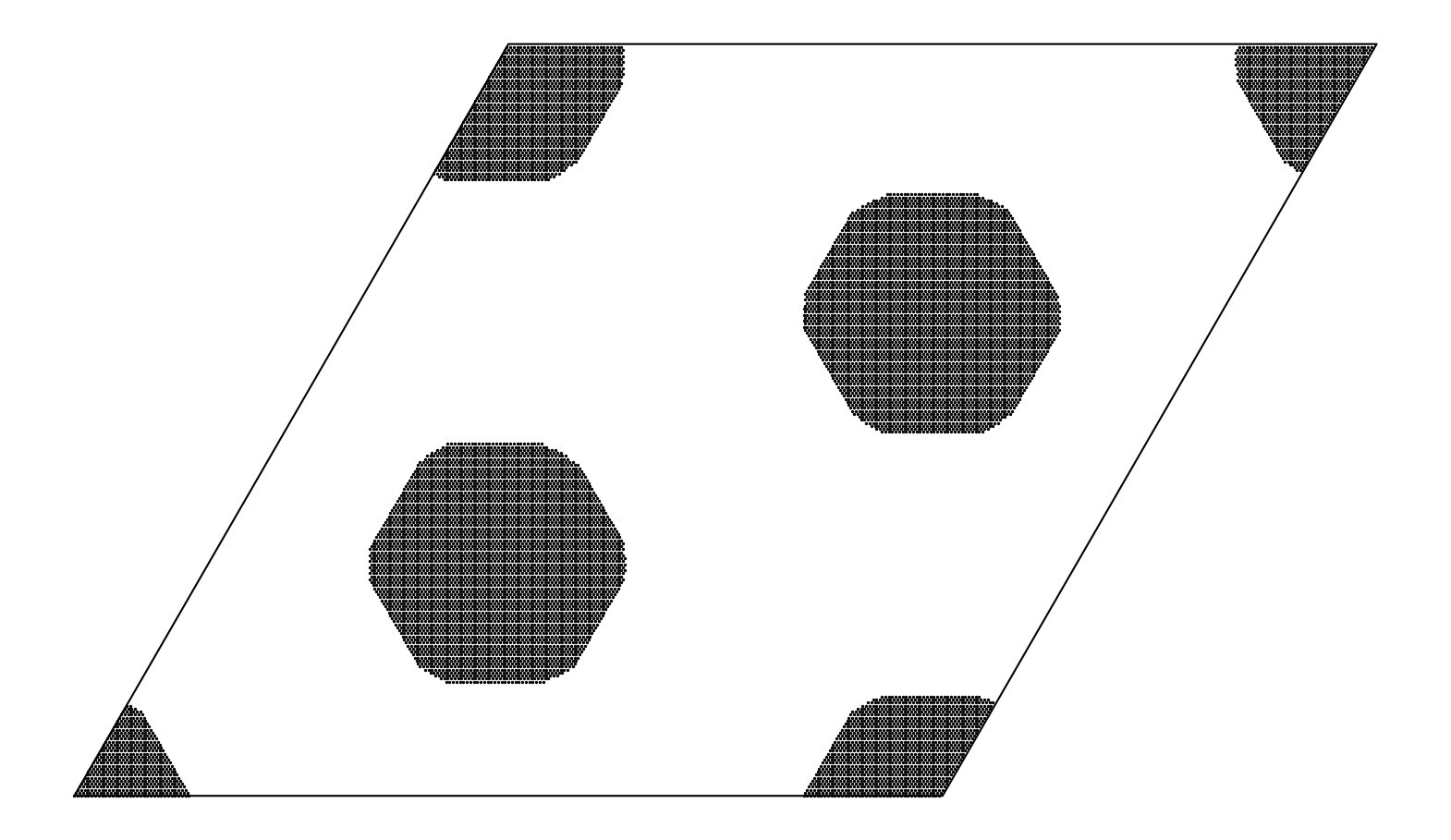}} \\
    $n = 100, |M| = 2193,$  & $n = 200, |M| = 8850,$ & $n = 250, |M| = 13962,$ \\
    $m_1(\bbbr^2) \ge 0.2193$ & $m_1(\bbbr^2) \ge 0.2212$  & $m_1(\bbbr^2) \ge 0.2233$
\end{tabular}
\caption{DGL-TreeSearch results for $G_{n, n}$ graphs based on torus $T_{l_1^*, l_2^*, \alpha^*}$} \label{dgl_exp_results}
\end{center}
\end{figure}
\begin{figure}
\begin{center}
\begin{tabular}{ccc}
    {\includegraphics[scale=0.085]{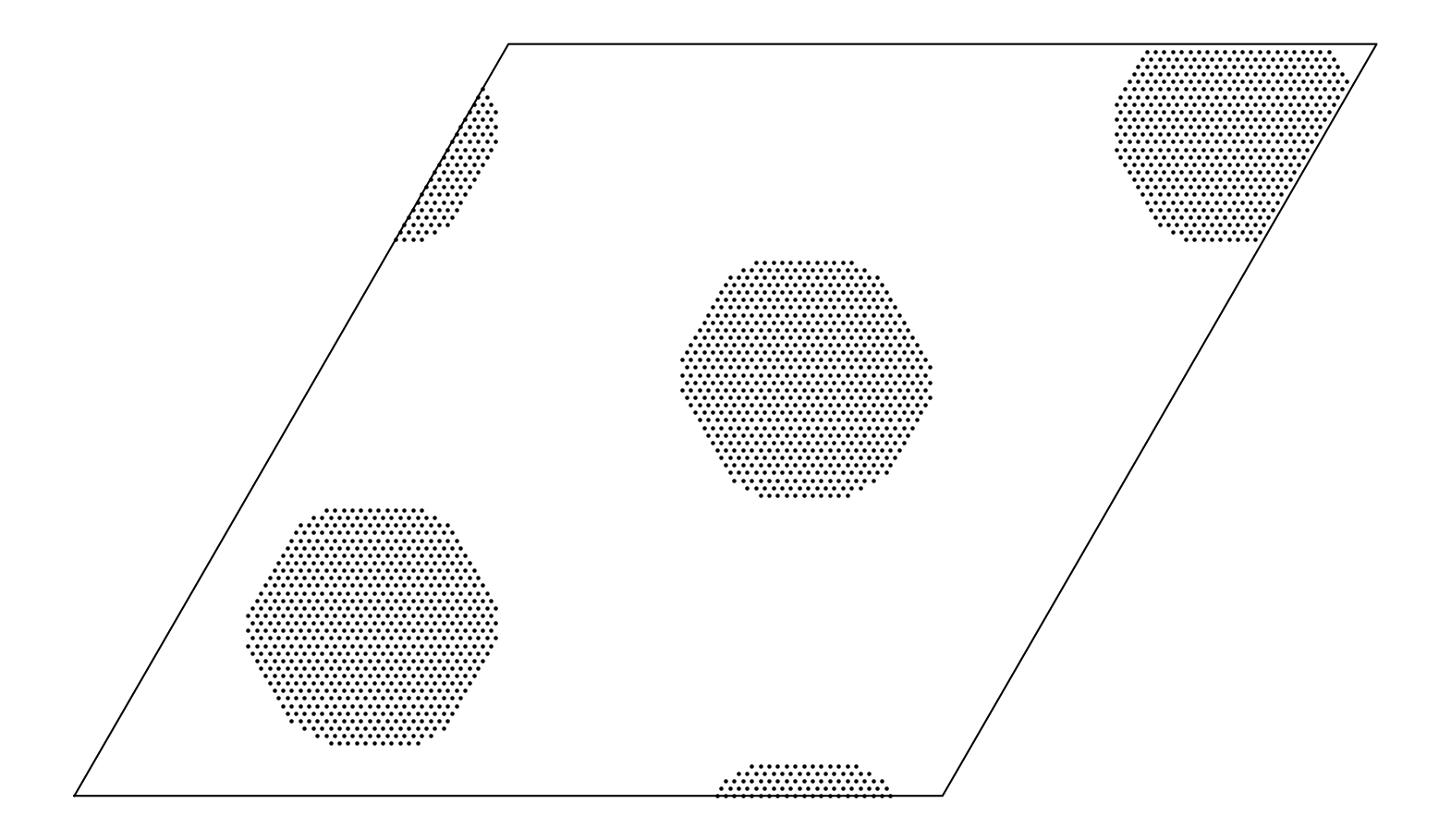}} &
    {\includegraphics[scale=0.085]{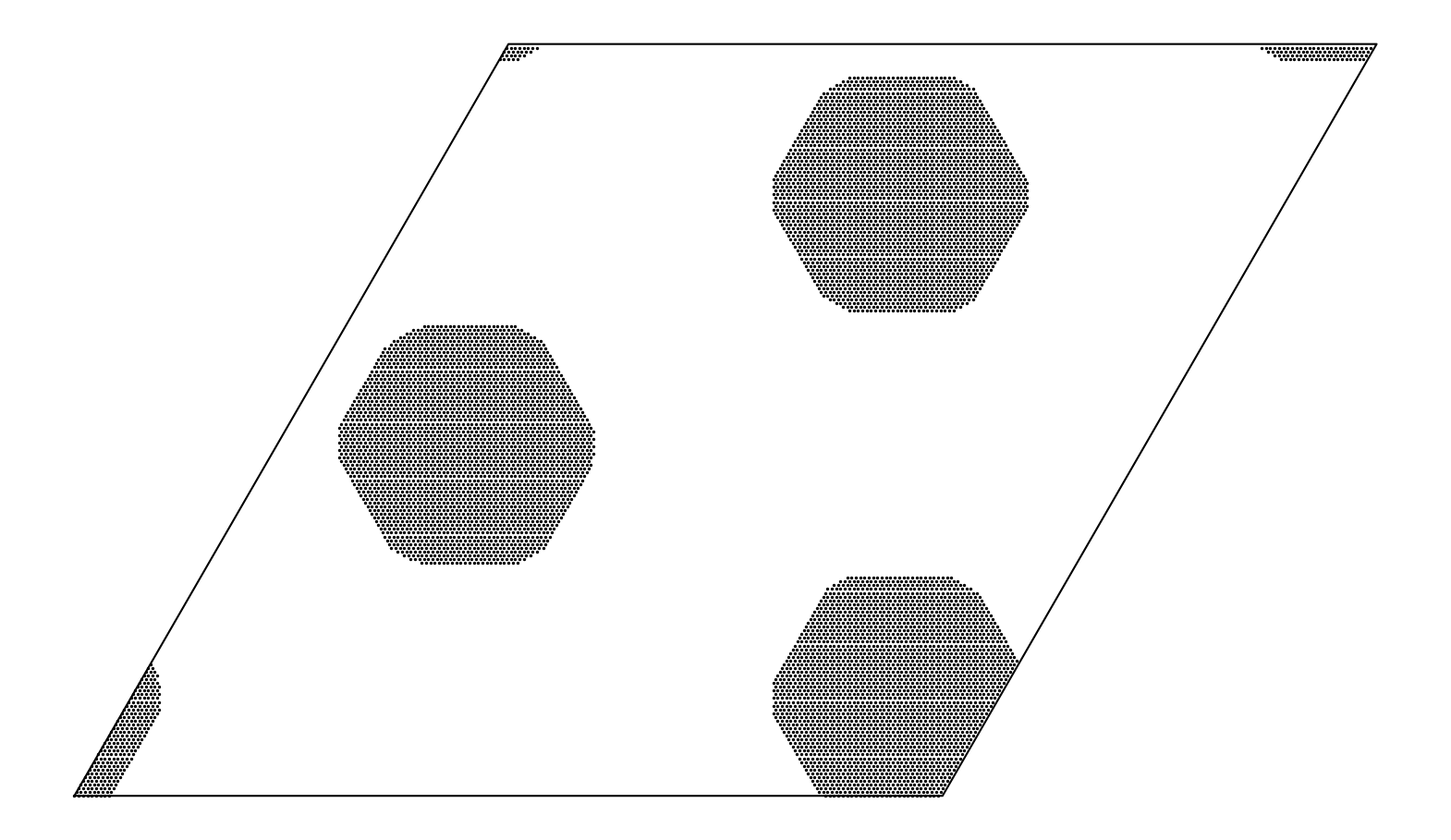}} &
    {\includegraphics[scale=0.085]{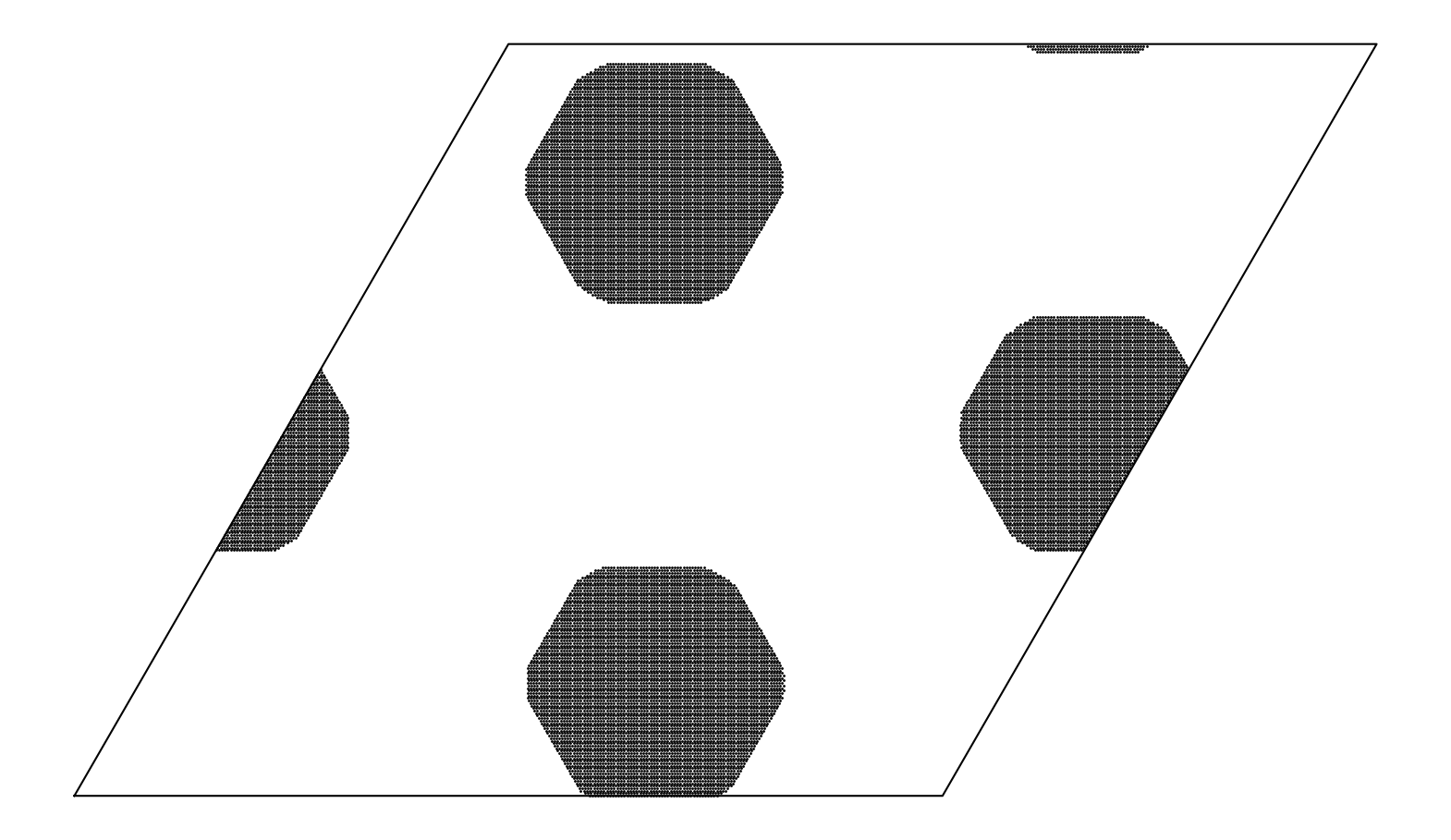}} \\
    $n = 100, |M| = 2193,$  & $n = 200, |M| = 8850,$ & $n = 300, |M| = 19960,$ \\
    $m_1(\bbbr^2) \ge 0.2193$ & $m_1(\bbbr^2) \ge 0.2212$  & $m_1(\bbbr^2) \ge 0.2217$
\end{tabular}
\caption{Intel-TreeSearch results for $G_{n, n}$ graphs based on torus $T_{l_1^*, l_2^*, \alpha^*}$} \label{intel_exp_results}
\end{center}
\end{figure}
\begin{figure}
\begin{center}
\begin{tabular}{ccc}
    {\includegraphics[scale=0.085]{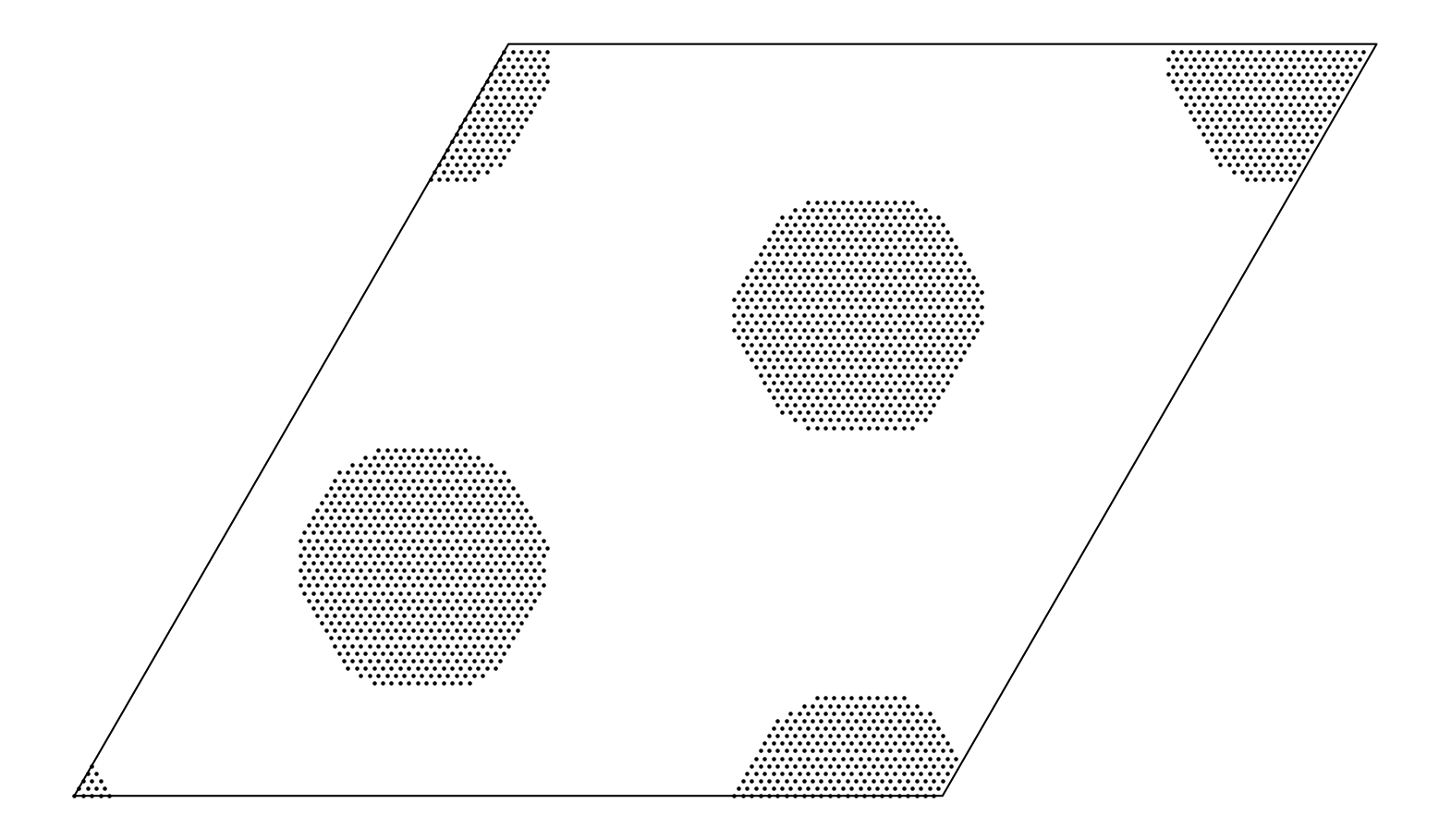}} &
    {\includegraphics[scale=0.085]{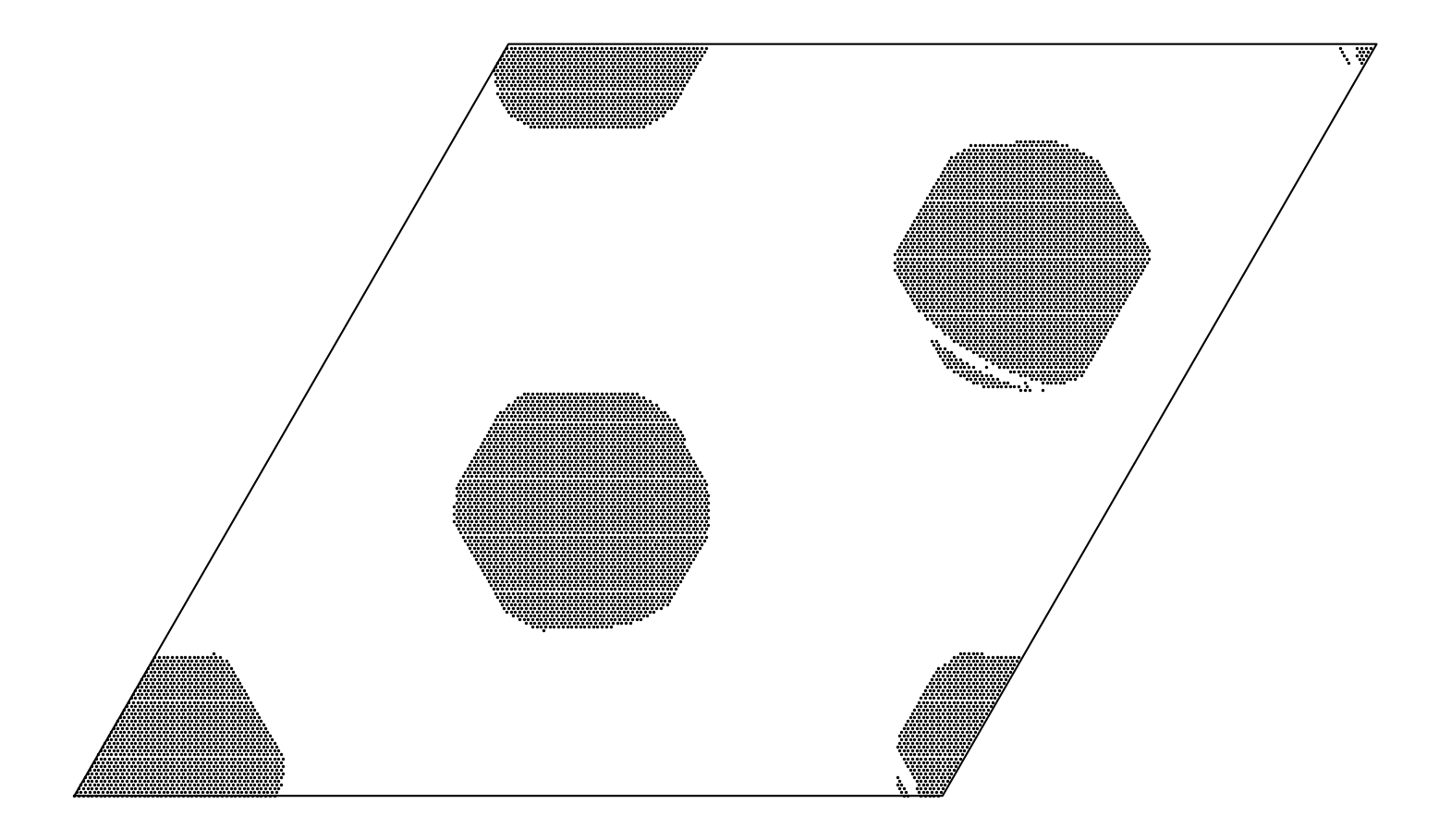}} &
    {\includegraphics[scale=0.085]{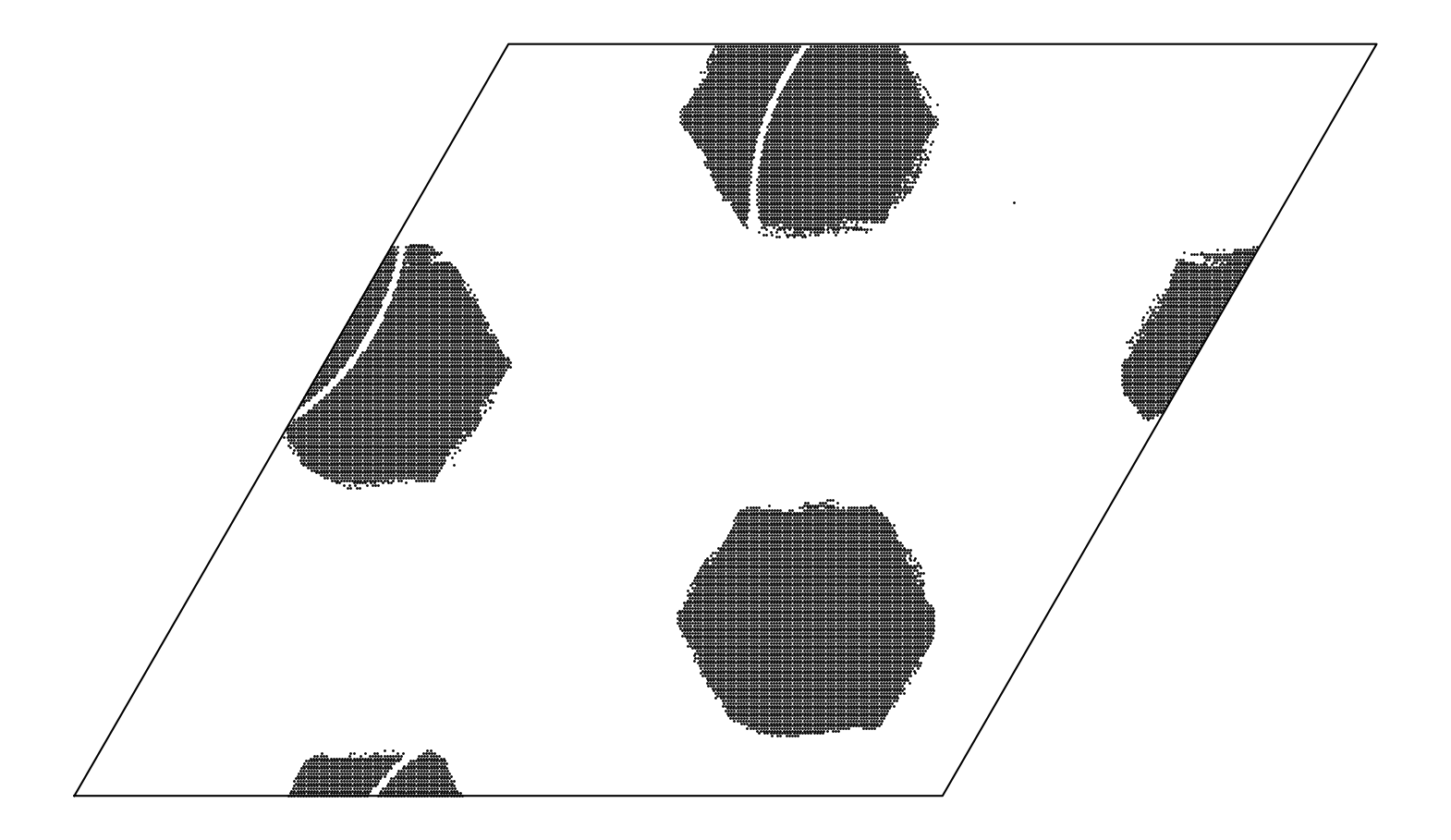}} \\
    $n = 100, |M| = 2190,$  & $n = 200, |M| = 8633,$ & $n = 300, |M| = 18329,$ \\
    $m_1(\bbbr^2) \ge 0.2190$ & $m_1(\bbbr^2) \ge 0.2158$  & $m_1(\bbbr^2) \ge 0.2036$
\end{tabular}
\caption{LwD results for $G_{n, n}$ graphs based on torus $T_{l_1^*, l_2^*, \alpha^*}$} \label{lwd_exp_results}
\end{center}
\end{figure}

\subsection{Analysis of experimental results}

This section presents an analysis of the results obtained from experiments conducted with graphs constructed from the flat torus $T_{l_1^*, l_2^*, \alpha^*}$.

While KaMIS, DGL-TreeSearch, and Intel-TreeSearch yielded similar or nearly identical independent set sizes for $n = 100$ and $n = 200$, DGL-TreeSearch outperformed KaMIS and Intel-TreeSearch for larger values of $n$. Notably, the DGL-based $m_1(\bbbr^2)$ estimation for $n = 250$ surpassed the lower bounds obtained using KaMIS and Intel-TreeSearch, even for $n = 300$. However, DGL-TreeSearch, as described in \cite{Li2018_combopt}, maintains a queue of solutions. Therefore, despite employing queue pruning techniques, the DGL approach necessitates greater RAM memory compared to the other three approaches investigated in this study. Consequently, this approach exhibits difficulty scaling to graphs with a large number of vertices. Our experiments involved graphs with up to $250^2 = 62,500$ vertices for DGL and up to $300^2 = 90,000$ vertices for the remaining three approaches.

Another notable observation pertains to the reinforcement learning-based approach Learning What to Defer (LwD) \cite{LwD2020}. When applied to large graphs, LwD consistently yielded smaller Maximum Independent Set (MIS) sizes compared to other methods. Specifically, for graphs with $n = 200$ and $n = 300$, corresponding to 40,000 and 90,000 vertices respectively, LwD's performance suffered. This can be attributed to the need for increased episode lengths and time limits for LwD to effectively approximate the MIS size in such large graphs. Additionally, this method necessitates additional fine-tuning of hyperparameters to achieve results comparable to other methods.

As one of the main results of this paper, we highlight that the independent sets obtained for graphs $G_{n,n}$ with large $n$ values exhibit strong resemblance to Croft's construction \cite{croft1967incidence} for the lower bound of $m_1(\bbbr^2)$, which has remained unimproved since 1967. To illustrate this finding, we determined the maximal independent set size using the KaMIS solver (as the best solver in our experiments, see Table~\ref{tab1}) for the graph $G_{400,400}$ based on the torus $T_{l_1^*, l_2^*, \alpha^*}$ and present this comparison in Figure~\ref{kamis_vs_croft_comparison}.

\begin{figure}[ht]
\begin{tabular}{cc}
    {\includegraphics[scale=0.18]{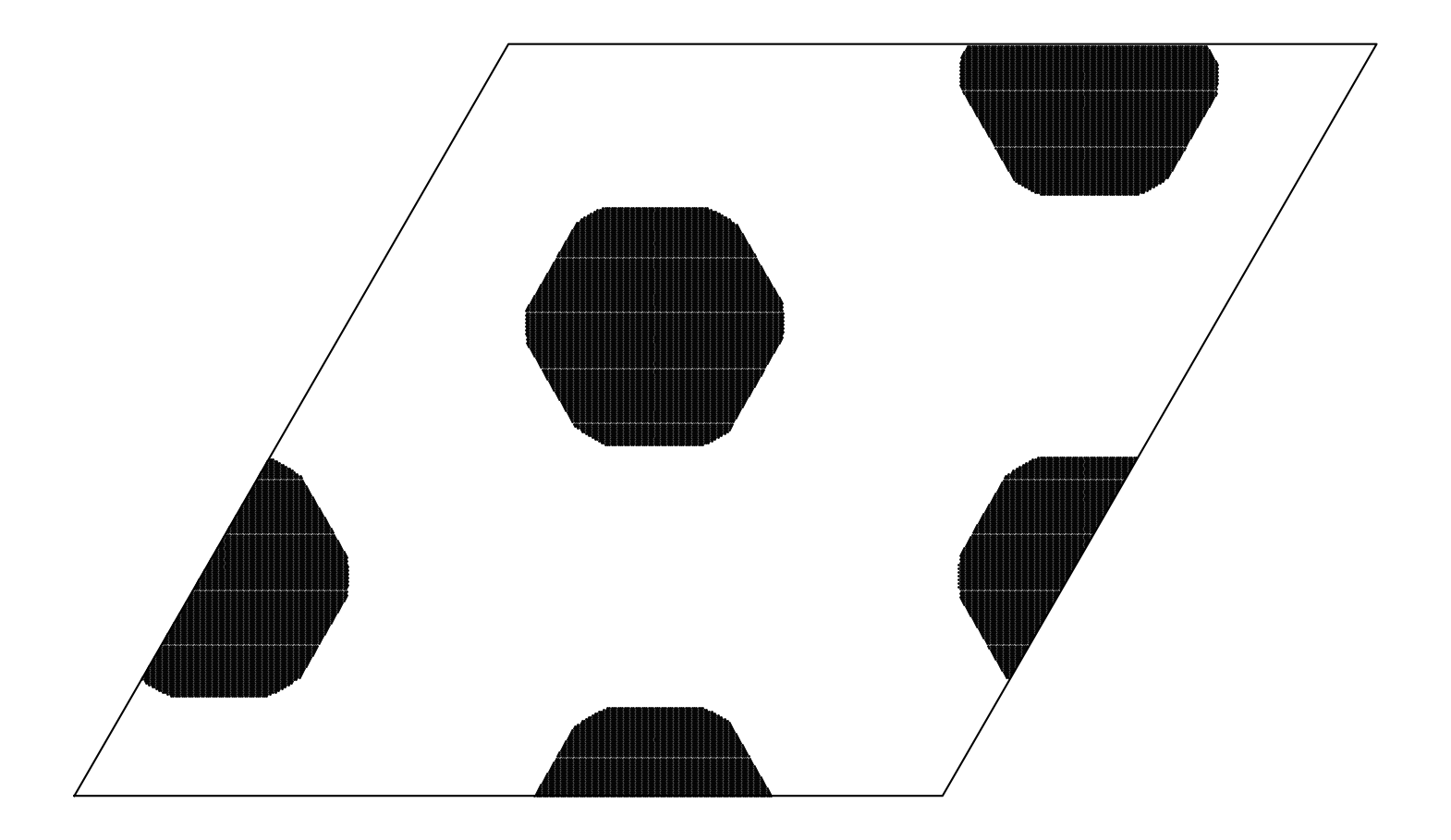}} &
    {\includegraphics[scale=0.13]{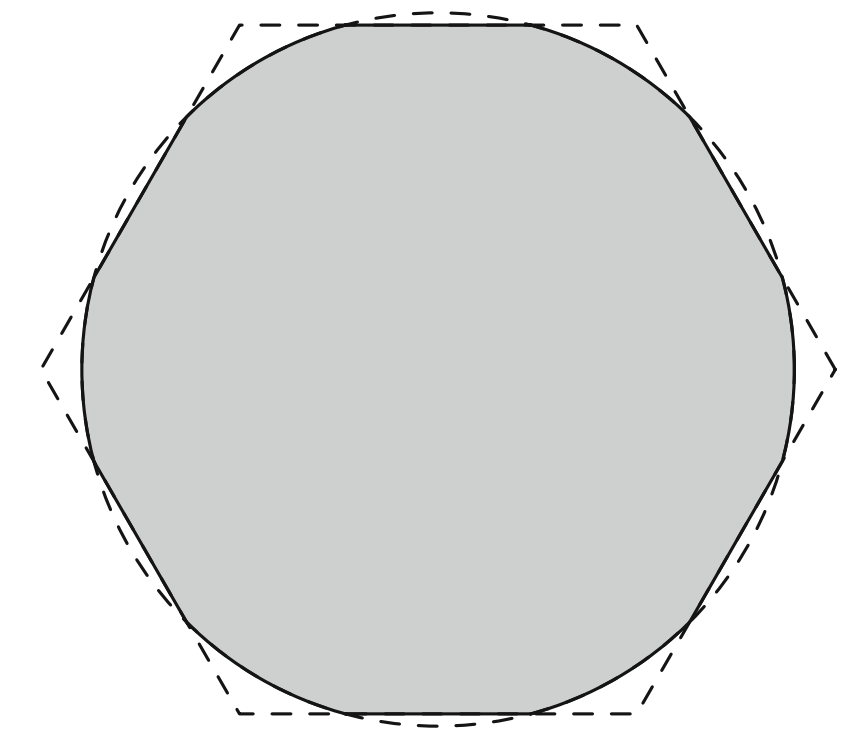}} 
\end{tabular}
\caption{On the \textit{left}, independent set via KaMIS: $n = 400$, $|M| = 35936$, $m_1(\bbbr^2) \ge 0.2246$ for the graph based on torus $T_{l_1^*, l_2^*, \alpha^*}$. On the \textit{right}, the optimal tortoise in Croft's construction \cite{croft1967incidence}.}
\label{kamis_vs_croft_comparison}
\end{figure}

Another intriguing aspect involves examining the local minimum obtained from Dataset-1 (refer to Table~\ref{tab1}). We examined the independent sets and associated lower bounds corresponding to the local minimum point on Dataset-1. This local minimum point was determined based on the average independent set size obtained from the four methods, or alternatively, using the KaMIS results. The independent sets on the flat torus with corresponding local minimum graph parameters were found via KaMIS. Results of this experiment are illustrated in Figure~\ref{min_density_examples}.

\begin{figure}
\begin{tabular}{cc}
    {\includegraphics[width=0.4\textwidth]{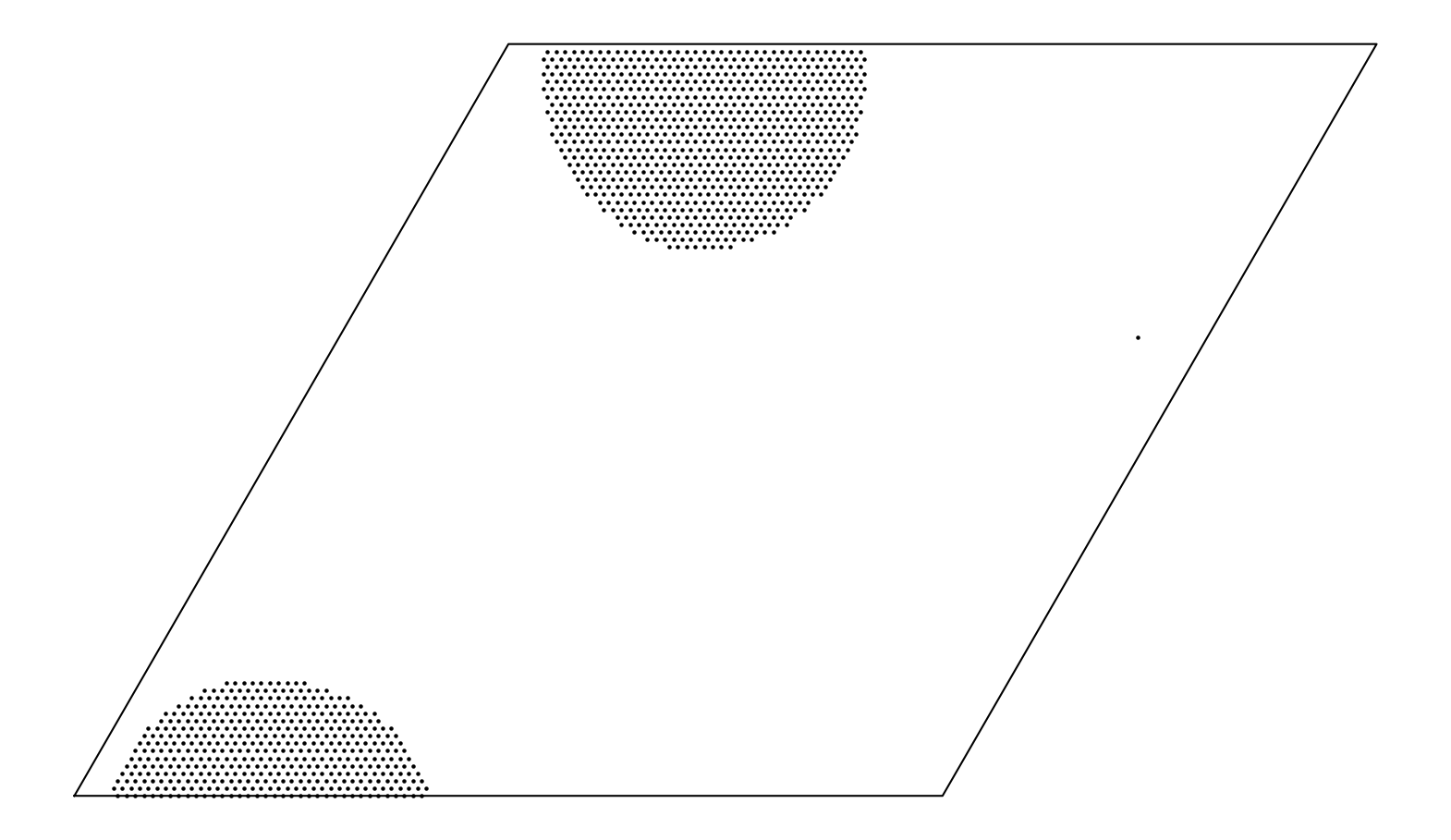}} &
    {\includegraphics[width=0.55\textwidth]{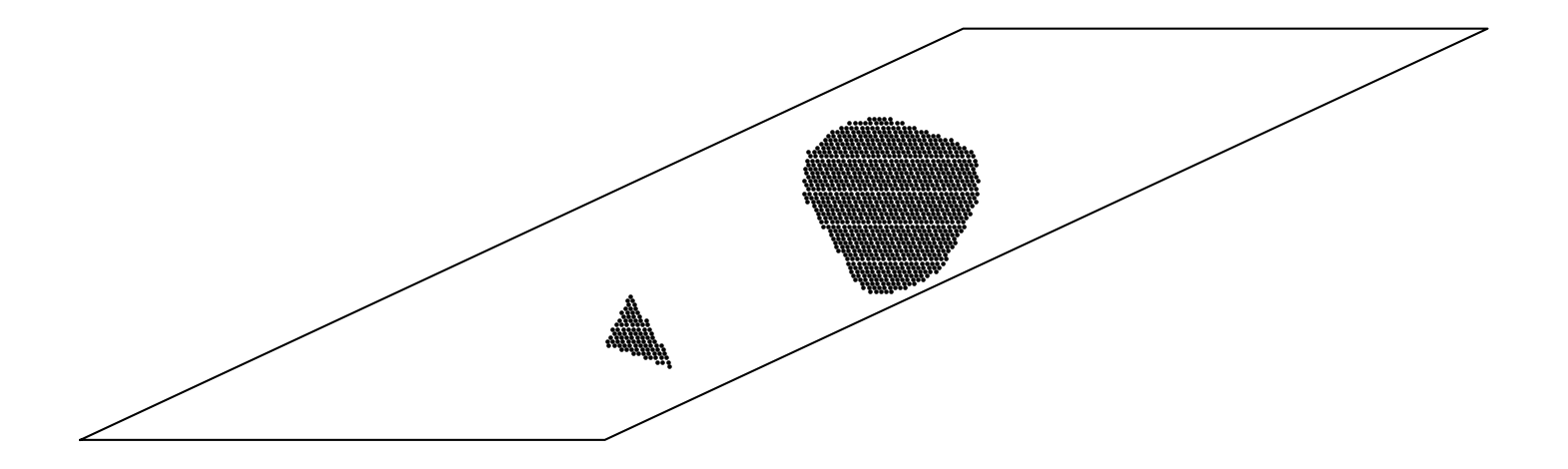}} \\
    $l_1 = 2.6, l_2 = 2.6, \alpha = \frac{\pi}{3}$  & $l_1 = 2.8, l_2 = 5.2, \alpha = \frac{5\pi}{36}$ \\
    $n = 100, |M| = 1273,$  & $n = 100, |M| = 1150$ \\
    $m_1(\bbbr^2) \ge 0.1273$ & $m_1(\bbbr^2) \ge 0.1150$  
\end{tabular}
\caption{On the \textit{left}, the local minimum was determined as the average (across four methods) of the maximum independent set size for those graphs where methods returned non-zero MIS sizes. On the \textit{right}, the local minimum was obtained using the KaMIS results for graphs where KaMIS found a non-empty independent set.}
\label{min_density_examples}
\end{figure}

\section{Conclusion}

This paper presents an approach for exploring lower bound estimates of $m_1(\bbbr^2)$ over the class of 1-avoiding planar sets that exhibit periodicity along two non-collinear vectors. The proposed reformulation of $m_1(\bbbr^2)$ as a Maximal Independent Set (MIS) problem on a specialized graph constructed on the flat torus is both theoretically and practically substantiated. The developed approach and programming tools \cite{our_repo} are also applicable to other problems related to the foundations of periodic sets with additional constraints. Notably, our approach can be used to approximate the maximal independent set in graphs constructed via an arbitrary flat torus $T_{l_1, l_2, \alpha}$.

If we restrict ourselves to the range of parameters under consideration, then the maximum independent set problem on a periodic grid leads to the approximations of Croft's construction (see Fig.~\ref{kamis_vs_croft_comparison}) and cannot improve the related lower bound $m_1(\bbbr^2) \ge 0.22936...$. Generally speaking, based on our experiments, we cannot claim that the estimation cannot be improved in this way for other parameter values.

Comparison of different algorithms for finding independent sets in geometric graphs of this type has shown that algorithms using GPU computation are less efficient than KaMIS in this case. Apparently, a neural network pre-trained on a graph with a large number of vertices is required for efficient use of GPU.

\begin{credits}
\subsubsection{\ackname} The author thanks Vsevolod Voronov for his guidance and discussions during this research.
\end{credits}

\bibliographystyle{splncs04}
\bibliography{ms}

\end{document}